\theoremstyle{plain}
\newtheorem{theorem}{Theorem}
\newtheorem{lemma}[theorem]{Lemma}
\newtheorem{proposition}[theorem]{Proposition}
\newtheorem{definition}[theorem]{Definition}
\newtheorem{remark}[theorem]{Remark}
\numberwithin{theorem}{section}
\numberwithin{equation}{section}
\newcommand{\ip}[2]{\ensuremath{\left\langle#1,#2\right\rangle}}
\newcommand{\B}{\mathbb{B}}
\newcommand{\C}{\mathbb{C}}
\newcommand{\R}{\mathbb{R}}
\newcommand{\T}{\mathbb{T}}
\newcommand{\cA}{\mathcal{A}}
\newcommand{\cF}{\mathcal{F}}
\newcommand{\cL}{\mathcal{L}}
\newcommand{\cS}{\mathcal{S}}
\newcommand{\tr}{\mathrm{tr}}
\newcommand{\id}{\mathrm{id}}
\newcommand{\Berg}{\cA^2_\nu(\Omega)} 
\newcommand{\bddf}{L^\infty(\Omega)} 
\newcommand{\bddfH}{L^\infty(\Omega)^H}
\newcommand{\bddfG}{L^\infty(G)} 
\newcommand{\Lone}{L^1(\Omega,d\lambda)} 
\newcommand{\Lp}{L^p(\Omega,d\lambda)} 
\newcommand{\LoneG}{L^1(\G)} 
\newcommand{\bdd}{\cL(\cA^2_\nu)}
\newcommand{\bddH}{\cL(\cA^2_\nu)^H}
\newcommand{\traceop}{\cS^1(\cA^2_\nu)}
\newcommand{\toepalg}{\mathfrak{T}_\nu(L^\infty)}
\newcommand{\toepalgH}{\mathfrak{T}_\nu(L^\infty)^H}
\newcommand{\wloc}{\mathcal{W}_{loc}^\alpha(\cA^2_\nu)}
\newcommand{\toep}{\mathcal{T}_\nu(L^\infty)}
\newcommand{\haar}[1]{\ d\mu_{G}(#1)}
\newcommand{\inv}[1]{\ d\lambda(#1)}
\newcommand{\dvz}[1]{\ dv_\nu(#1)}
\newcommand{\dv}{dv_\nu}
\newcommand{\actopL}[2]{L_{#1}^\nu(#2)}
\newcommand{\actfL}[2]{\ell_{#1}(#2)}
\newcommand{\act}[2]{\tau_{#1}(#2)}
\newcommand{\G}{G}
\newcommand{\Br}{\boldsymbol{G_\rho}}
\newcommand{\Dr}{B_\rho}
\newcommand{\Brr}[1]{\boldsymbol{G_{#1}}}
\newcommand{\pinu}{\pi_\nu}
\newcommand{\knu}{k^\nu}
\newcommand{\ipnu}[2]{\ensuremath{\left\langle#1,#2\right\rangle}_\nu}
\newcommand{\normnu}[1]{\|#1\|_\nu}
\newcommand{\I}[1]{I_{#1}^\alpha}
\newcommand{\Ione}[1]{I_{#1}^\alpha}
\newcommand{\J}[2]{J_{#1,#2}^\alpha}
\newcommand{\Aint}{A}
\newcommand{\Bint}{B}
\newcommand{\gen}{p}
\keywords{Toeplitz operators, Toeplitz algebra, Bergman space, bounded symmetric domains, density}
\author{Vishwa Dewage }
\address{Department of Mathematics, Clemson University\\
South Carolina, SC 29634, USA}
\email{vdewage@clemson.edu}
\subjclass[2000]{22D25, 30H20, 47B35, 47L80}
\title[Toeplitz Algebra of Bounded Symmetric Domains]{Toeplitz algebra of bounded symmetric domains: A quantum harmonic analysis approach via localization}
\begin{document}


\begin{abstract}
    We prove that Toeplitz operators are norm dense in the Toeplitz algebra $\mathfrak{T}(L^\infty)$ over the weighted Bergman space $\mathcal{A}^2_\nu(\Omega)$ of a bounded symmetric domain $\Omega\subset\mathbb{C}^n$. Our methods use representation theory, quantum harmonic analysis, and weakly-localized operators. Additionally, we note that the set of all $\alpha$-weakly-localized operators form a self-adjoint algebra, containing the set of all Toeplitz operators, whose norm closure coincides with the Toeplitz algebra. We also prove an $H$-invariant version of this theorem.
\end{abstract}

\maketitle

\setcounter{tocdepth}{1}
\tableofcontents


\section{Introduction}

The density of Toeplitz operators in various operator algebras over the Bergman space $\cA^2(\Omega)$ of a domain $\Omega\subset \C^n$ is a long-standing question in operator theory. Engli\v{s} proved that Toeplitz operators are dense in the algebra of all bounded operators with respect to the strong operator topology \cite{E92}. Then Berger and Coburn showed that Toeplitz operators are norm dense in the algebras of compact operators and in trace-class operators \cite{BC94}.

The Toeplitz algebra $\toepalg$ is the $C^*$-subalgebra of bounded operators generated by Toeplitz operators with bounded symbols. As the products of Toeplitz operators are not Toeplitz operators, understanding the Toeplitz algebra is by no means an easy task.
While the density results of Engli\~s and, Berger and Coburn are independent of the domain, the density of Toeplitz operators in the Toeplitz algebra was established much later for the Bergman space over the unit ball $\cA^2(\B^n)$ and the Fock space $\cF^2(\C^n)$ by Xia \cite{X15}. In this article, we generalize this result to weighted Bergman spaces $\Berg$ of any bounded symmetric domain $\Omega$. Our techniques rely on representation theory and quantum harmonic analysis (QHA). We also remark that this means that the Toeplitz algebra is linearly generated. Further, we generalize this result to Toeplitz algebras invariant under a group action.

In his paper titled "quantum harmonic analysis on the phase space", Werner explores classical harmonic analysis notions such as convolutions and Fourier transforms for operators \cite{W84}. Eventually, QHA was found to be a powerful tool in mathematical physics, time-frequency analysis, and operator theory \cite{BBLS22, FHL24, KL21, KLSW12, LS20, S24}. As Fulsche observed in \cite{F19}, a Toeplitz operator on the Fock space can be written as a QHA convolution, making QHA a natural framework for discussing Toeplitz operators on the Fock space. Fulsche then used this framework to reprove Xia's density theorem for the Fock space. Since then, QHA has proven to be an effective tool in analyzing Toeplitz operators on Fock spaces \cite{DM23, DM24, DM24a, FH23, F19, FR23}. As for the Bergman space $\cA^2(\B^n
)$, QHA is more challenging due to the noncommutativity of the group acting on the unit ball $\B^n$.
In \cite{DDMO24}, QHA was discussed for the Bergman space $\cA^2(\B^n
)$, and is then used to discuss approximations by Toeplitz operators. In this article,  we formulate some of the QHA notions for the Bergman space $\Berg$ of a bounded symmetric domain, to be utilized in our analysis.

The other ingredient in our proof is $\alpha$-weakly localized operators, a concept introduced by Isralowitz, Mitkovski, and Wick in \cite{IMW13} for $\cA^2(\B^n)$. They prove that the $\alpha$-weakly localized operators form a self-adjoint subalgebra $\wloc$ of all bounded operators containing the set of all Toeplitz operators with bounded symbols. Then Xia proves that the norm closure $\overline{\wloc}$ coincides with the norm closure of the set of all Toeplitz operators. This in turn proves that the Toeplitz algebra coincides with $\overline{\wloc}$ and that  Toeplitz operators are norm dense in the Toeplitz algebra. Since the motivation behind weakly-localized operators is the well-known Forelli-Rudin estimates, which also hold for bounded symmetric domains, we naturally extend the definition of weakly-localized operators. We then extend Xia's result using tools from QHA.

The article is organized as follows. In Section \ref{sec:prelim}, we collect facts about bounded symmetric domains, Bergman spaces $\Berg$, and discuss a projective representation acting on $\Berg$. In Section \ref{sec:QHA}, we discuss QHA for $\Berg$: we introduce the convolution between a function and an operator and then view Toeplitz operators as QHA convolutions. Additionally, we point out that certain QHA convolutions are always contained in the Toeplitz algebra. In Section \ref{sec:wloc}, we introduce weakly localized operators for bounded symmetric domains, and show that they form a self-adjoint algebra that contains Toeplitz operators. In section \ref{sec:Toeplitz}, we establish Theorem \ref{theo:wloc_Toeplitz}, our main result, proving  Toeplitz operators are norm dense in the Toeplitz algebra $\toepalg$. We also provide several characterizations of the Toeplitz algebra. Lastly, in section \ref{sec:HToeplitz}, we prove Theorem \ref{theo:wloc_ToeplitzH}, generalizing our main results to the case of Toeplitz algebras invariant under a group action.


\section{Preliminaries}\label{sec:prelim}

\subsection{Bounded symmetric domains}
A bounded domain $\Omega\subset \C^n$ is said to be symmetric, if for each $z\in \Omega$, there exists an automorphism $\sigma_z$ of period two for which $z$ is an isolated fixed point. A comprehensive overview of the theory of bounded symmetric domains can be found in \cite{H78,H63,K69,L69,L77,S80}. We mainly follow the conventions in \cite{L77}.

The Bergman metric $g$ is a Hermitian metric on the tangent bundle $T_\Omega\C^n$, defined by
$$g_{i,j}(z)=\partial_i\Bar{\partial}_j\log K(z,z),$$
where $K(z,z)$ is the reproducing kernel of the Bergman space $\cA^2(\Omega)$ of all holomorphic functions on $\Omega$ that are square-integrable with respect to the Lebesgue measure.
We point to \cite{H78}, \cite{K01}, and \cite{L77}, as references on the Bergman metric.
 
Since $\Omega$ is bounded, $g_{i,j}(z)$ is positive definite at each $z\in \Omega$. The Bergman metric is invariant under the automorphism $\sigma_z$. Therefore, $(\Omega,g)$ is a Hermitian symmetric space. Conversely, every Hermitian symmetric space of noncompact type can be realized as a bounded symmetric domain via its Harish-Chandra realization (see Theorem 7.1 in \cite{H78}). The aforementioned domain is convex, circled, and contains the origin.

Let $\mathrm{Aut}(\Omega)$ denote the group of all automorphisms (biholomorphic maps onto itself) of $\Omega$. Let $G$ denote the connected component of $\mathrm{Aut}(\Omega)$ containing the identity. Then $G$ is a semisimple Lie group acting transitively on $\Omega$. Let $K$ denote the stabilizer
$$K=\{x\in G\mid x\cdot 0 =0\}.$$
We have that $\Omega$ is the homogeneous space $\Omega=G/K$, where the identification is made via the map $xK\mapsto x\cdot 0$. 

Given $z\in \Omega$, we define the biholomorphism $\tau_z$ by setting $\tau_z=\sigma_{z_0}$, where $z_0$ is the midpoint of the geodesic connecting $0$ and $z$. Then $\tau_z$ is an involution that interchanges $z$ and $0$.

\begin{lemma}\label{lem:Aut}
    Let $x\in G$. Then $x=\tau_{x\cdot 0}k_x$ for some $k_x\in K$.
\end{lemma}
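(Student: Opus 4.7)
The plan is to use the transitivity of $G$ on $\Omega$, reducing the claim to the standard coset decomposition $\Omega \cong G/K$ once one knows that $\tau_z$ itself lies in $G$ (not merely in $\mathrm{Aut}(\Omega)$). So the proof splits into two small pieces: first recording that $\tau_z\in G$, then performing a coset computation.

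For the first piece, I would argue as follows. Since $\Omega$ is circled and contains $0$, the geodesic symmetry at the origin is $\sigma_0(w)=-w$, and this sits inside the one-parameter subgroup $\{w\mapsto e^{i\theta}w:\theta\in\R\}$ of $\mathrm{Aut}(\Omega)$, hence lies in the identity component $G$. For general $z\in\Omega$, pick $g\in G$ with $g\cdot 0=z_0$, where $z_0$ is the midpoint of the geodesic from $0$ to $z$; since $G$ acts transitively such a $g$ exists. Then $\sigma_{z_0}=g\sigma_0 g^{-1}$, so $\tau_z=\sigma_{z_0}\in G$.

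For the second piece, let $x\in G$ and set $z=x\cdot 0$. Define $k_x:=\tau_z^{-1}x=\tau_z x$, using that $\tau_z$ is an involution. Because $\tau_z$ interchanges $0$ and $z$,
\[
k_x\cdot 0 \;=\; \tau_z(x\cdot 0) \;=\; \tau_z(z) \;=\; 0,
\]
so $k_x\in K$. Rearranging gives $x=\tau_z k_x=\tau_{x\cdot 0}k_x$, which is the claim.

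The only even mildly delicate point is the membership $\tau_z\in G$; once this is granted, the lemma is pure bookkeeping about the identification $\Omega=G/K$ via $xK\mapsto x\cdot 0$, since $\tau_{x\cdot 0}$ and $x$ map $0$ to the same point and hence represent the same left coset of $K$.
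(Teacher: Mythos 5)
Your proof is correct and follows essentially the same route as the paper: observe that $(\tau_{x\cdot 0}x)(0)=\tau_{x\cdot 0}(x\cdot 0)=0$, conclude $\tau_{x\cdot 0}x\in K$, and use that $\tau_{x\cdot 0}$ is an involution to rearrange. The only addition is your verification that $\tau_z$ lies in the identity component $G$ (via $\sigma_0=-\mathrm{id}$ on the circled domain and conjugation), a point the paper's proof leaves implicit; your argument for it is sound.
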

\begin{proof}
    Note that $(\tau_{x\cdot 0}x)(0)=\act{x\cdot 0}{x\cdot 0}=0$. Hence  $\tau_{x\cdot 0}x=k_x$ for some $k_x\in K$. As $\tau_x^{-1}=\tau_x$ it follows that $x=\tau_{x\cdot 0}k_x$.
\end{proof}

A bounded symmetric domain is said to be irreducible if it is not biholomorphic to a cartesian product of bounded symmetric domains. An irreducible bounded symmetric domain $\Omega$ is characterized by three constants, the rank $r$ and multiplicities $a$ and $b$ (see \cite{E99}). The tuple $(r,a,b)$ is called the type of $\Omega$.

Throughout this article, we let $\Omega\subset \C^n$ be an irreducible bounded symmetric domain of type $(r,a,b)$ in its Harish-Chandra realization.
Let $\gen$ denote the genus $\gen=(r-1)a+b+2$. The complex dimension $n$ satisfies $n=r(r-1)a/2+rb+r$. The Jordan triple determinant $h(z,w)$ associated with $\Omega$ is a polynomial in $\overline{z}$ and $w$. This encodes many of the important properties of the associated Bergman space. For the unit ball $\B^n$ we have $(r,a,b)=(1,2,n-1)$, $\gen=n+1$ and
$$h(z,w)=1-\ip{z}{w},\quad z,w\in \B^n.$$
 The polynomial $h$ satisfies the following: for $z,w, \xi\in \Omega$,
\begin{enumerate}
    \item $h(z,w)\neq 0$
    \item $h(z,0)=1$
    \item $h(z,w)=\overline{h(w,z)}$
    \item $h(kz,kw)=h(z,w)$ for all $k\in K$
    \item $h(\act{\xi}{z},\act{\xi}{w})=h(z,w)$.
\end{enumerate}
We would often identify the Jordan triple determinant as a function on $G\times G$, which we also denote by $h$, and write $h(x,y):=h(x\cdot 0, y\cdot 0)$ for $x,y\in G$. We also have
\begin{align}\label{eq:JTdet}
    h(xy_1,xy_2)=\frac{h(x,x)h(y_1,y_2)}{h(y_1,x)h(x,y_2)}.
\end{align}

\subsection{Cartan's classification}\label{subsec:examples}
Irreducible bounded symmetric domains are classified into six types by Cartan's classification. The first four families are known as classical domains, while the last two are termed exceptional domains, as only one domain exists for each exceptional type.
\begin{align*}
I_{m_1,m_2} &: m_1 \times m_2 \text{ complex matrices } A \text{ satisfying } I - A^*A > 0. \\
II_m &: \text{ the symmetric } \text{ matrices in } I_{m,m}. \\
III_m &: \text{ the skew-symmetric } 
\text{ matrices in } I_{m,m}.\\
IV_n &: \text{The Lie ball } (z\in \mathbb{C}^n  \text{ satisfying } |\sum z_j^2|<1, \quad 1 + |\sum z_j^2|^2 - 2|z|^2 > 0). \\
V &: \text{ the unit ball of } 1 \times 2 \text{ matrices over the 8-dimensional Cayley algebra.} \\
VI &: \text{ the unit ball of } 3 \times 3 \text{ Hermitian matrices over 8-dimensional Cayley algebra.}
\end{align*}

In the following table, we list the unique constants (r,a,b) associated with each type of classical domain.\\
\begin{center}
    \begin{tabular}{|c|c|c|c|c|c|}
\hline
domain & rank $r$ & $a$ & $b$ & dimension $n$ & genus $\gen$ \\
\hline
$I_{m_1,m_2}$ ($m_1 \leq m_2$) & $m_1$ & 2 & $m_2-m_1$ & $m_1m_2$ & $m_1+m_2$ \\
$II_{m}$ & $m$ & 1 & 0 & $\frac{1}{2}m(m+1)$ & $m+1$ \\
$III_{m}$, $m=2r+\epsilon$, $\epsilon \in \{0,1\}$ & $r$ & 4 & $2\epsilon$ & $r(2r+2\epsilon-1)$ & $4r+2\epsilon-2$ \\
$IV_{n}$ & 2 & $n-2$ & 0 & $n$ & $n$ \\
\hline
\end{tabular}
\end{center}
For more information on classical domains, we refer to \cite{H63}.

\subsection{The Bergman distance}\label{subsec:distance}
Here, we discuss the Bergman distance function associated with the Bergman metric.
Note that the length of a tangent vector $\xi\in T_z\C^n$ is given by
$$|\xi|_{g,z}=\sqrt{\sum_{i,j=1}^n g_{i,j}(z)\xi_i\overline{\xi_j}}.$$
The length of a  $C^1$-curve $\gamma:[0,1]\to \Omega$ is defined as
$$length(\gamma)=\int_0^1 \Big|\odv{\gamma}{t}\Big|_{g,\gamma(t)} dt= \int_0^1 \sqrt{\sum_{i,j=1}^n g_{i,j}(\gamma(t))\gamma'_i(t)\overline{\gamma'_j(t)}} \ dt.$$

The Bergman distance $\beta:\Omega\times\Omega\to [0,\infty)$ is then given by
$$\beta(z,w)=\inf\{length(\gamma) \mid \gamma:[0,1]\to \Omega \text{ is piecewise $C^1$ s.t. $\gamma(0)=z$ and $\gamma(1)=w$}\}.$$
Then $\beta$ is a metric on $\Omega$ which is invariant under automorphisms. Since any geodesic can be extended indefinitely by repeated geodesic symmetries, $(\Omega,\beta)$ is a complete metric space by Hopf-Rinow theorem. Occasionally, the Bergman distance $\beta$ is also called the Bergman metric in the literature. However, we avoid this terminology. We define the ball with radius $\rho$ centered at $z\in \Omega$ by
$$\Dr(z)=\{w\in \Omega\mid \beta(w,z)<\rho\}.$$

The Bergman distance $\beta$ also induces an almost metric space structure on the group $G$. While the triangle inequality and symmetry are still satisfied by $\beta$ on $G\times G$, the identity of indiscernibles fails. For $x\in G$, we define the set $\Br(x)$  by
$$\Br(x)=\{y\in G\mid \beta(x\cdot 0,y\cdot 0)<\rho\}.$$

If $E\subset \C^n$ is measurable, we denote the Lebesgue measure of $E$ by $|E|$.
In the following lemma, we collect a few useful facts about these sets. 
\begin{lemma}\label{lem:balls}
    We have the following:
    \begin{enumerate}
        \item $\lim_{\rho\to \infty}|\Omega\setminus \Dr(0)|=0$.
        \item For each $\rho\in [0,\infty)$, $\Dr(0)$ is precompact.
        \item For each $\rho\in [0,\infty)$, $\Br(0)$ is precompact and has finite Haar measure.
        \item Let  $\rho\in [0,\infty)$. If $y\in \Brr{\rho/2}(x)$, then $G\setminus \Br(x)\subset G\setminus \Brr{\rho/2}(y)$
    \end{enumerate}
\end{lemma}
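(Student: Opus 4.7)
The plan is to handle the four items essentially independently, relying on the interaction between the Bergman distance $\beta$, the Lebesgue measure on $\Omega$, and the orbit map $p\colon G\to\Omega$, $x\mapsto x\cdot 0$. For (1), I would observe that $\beta$ is a metric on $\Omega$ and every point sits at a finite $\beta$-distance from $0$, so the nested family $\{\Dr(0)\}_{\rho>0}$ exhausts $\Omega$. Because $\Omega\subset\C^n$ is bounded, its Lebesgue measure is finite, and continuity of measure from below then yields $|\Dr(0)|\nearrow|\Omega|$, equivalently $|\Omega\setminus \Dr(0)|\to 0$.

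For (2), I would invoke the Hopf--Rinow theorem already noted above: $(\Omega,\beta)$ is a complete length space, so closed and $\beta$-bounded subsets are compact. The closure $\overline{\Dr(0)}$ taken inside $(\Omega,\beta)$ is such a set; since the Bergman topology coincides with the Euclidean subspace topology on $\Omega$, this shows $\Dr(0)$ is precompact in $\Omega$ in the usual sense. For (3), the key observation is that $K$ is compact (being the isotropy subgroup of $0$ in the bounded domain $\Omega$, it acts by unitaries on the tangent space at $0$), which makes $p$ a proper continuous map. Then $\Br(0)=p^{-1}(\Dr(0))$ is the preimage of a precompact set under a proper map, hence precompact in $G$; a precompact subset of a locally compact group has finite (left) Haar measure, giving both parts of (3).

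For (4), the claimed inclusion is equivalent, upon passing to complements, to $\Brr{\rho/2}(y)\subset\Br(x)$, and this reduces to the triangle inequality for $\beta$: if $z\in\Brr{\rho/2}(y)$ and $y\in\Brr{\rho/2}(x)$, then $\beta(x\cdot 0,z\cdot 0)\le\beta(x\cdot 0,y\cdot 0)+\beta(y\cdot 0,z\cdot 0)<\rho$, so $z\in\Br(x)$. The only step that requires a small amount of care is (3), where one must argue cleanly that compactness of $K$ implies properness of $p$; this is standard but worth spelling out via the fact that $p$ factors through the homeomorphism $G/K\cong\Omega$ with compact fibers, so preimages of compact sets are compact. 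Once this is in hand, the remaining assertions are elementary.
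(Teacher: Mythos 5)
Your proof is correct and follows essentially the same route as the paper: Hopf--Rinow for (2), properness of the orbit map $G\to G/K\cong\Omega$ (which the paper leaves implicit in the statement that $\Br(0)$ is precompact because $\Dr(0)$ is) together with the Radon property of the Haar measure for (3), and the triangle inequality for (4). The only variation is in (1), where you exhaust $\Omega$ by the balls $\Dr(0)$ and invoke continuity of measure from below on the finite-measure set $\Omega$, whereas the paper uses inner regularity of Lebesgue measure to place a compact set of nearly full measure inside some $\Dr(0)$; both arguments are equally valid and elementary.
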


\begin{proof}
    Proof of (1): By the inner regularity of the Lebesgue measure there is a sufficiently large compact set $E$ that makes $|\Omega\setminus E|$ is arbitrarily small. Since $E$ is compact, we have $E\subset\Dr(0)$ for some $\rho>0$ and the result follows.
    
    Proof of (2): Recall that $(\Omega,\beta)$ is a complete metric space. Then any closed and bounded set is compact by the Hopf-Rinow theorem. Therefore, $\Dr(0)$ is precompact.
    
    Proof of (3): Note that
    $$\Br(x)=\{y\in G\mid y\cdot 0\in \Dr(x\cdot 0)\}.$$
    Then $\Br(x)\subset G$ is precompact as $\Dr(x\cdot 0)\subset G/K$ is precompact. Now since the Haar measure is a Radon measure, we have $\mu_G(\Br(x))<\infty$.
    
    Proof of (4): By triangle inequality, $\Brr{\rho/2}(y)\subset \Br(x)$. Therefore,
    $G\setminus\Br(x)\subset G\setminus \Brr{\rho/2}(y)$.
\end{proof}

\subsection{The Bergman space}\label{subsec:Berg}
Fix $\nu>-1$ and define the measure
$$\dvz{z}=c_\nu h(z,z)^{\nu} dz$$
where $dz$ denotes the normalized Lebesgue measure on $\Omega$, and $c_\nu>0$ is such that $\int_\Omega \dv=1$.
\begin{remark}
    Assuming that $\nu>-1$ guarantees that $\dv$ is a probability measure. However, as we discuss $\alpha$-weakly localized operators, we need an extra restriction $\nu>-1+(r-1)a/2$ for our theory to work.
\end{remark}
The Bergman space $\Berg$ consists of all holomorphic functions that are square-integrable with respect to $\dv$. The inner-product and norm in $\Berg$ are denoted by $\ipnu{\cdot}{\cdot}$ and $\normnu{\cdot}$ respectively. The Bergman space $\Berg$ is a reproducing kernel Hilbert space with reproducing kernel $K^\nu$ given by
\[
   K^\nu(w,z)=K_z^\nu(w)=h(z,w)^{-(\nu+\gen)},\ \ z,w\in \Omega.
\]
We have the reproducing formula
$$f(z)=\ipnu{f}{K^\nu_z},\quad f\in \Berg, z\in \Omega.$$
The normalized reproducing kernels $k_z^\nu$ are given by
$$k_z^\nu(w)=\frac{K_z^\nu(w)}{\normnu{K_z^\nu}}=\frac{h(z,z)^{\frac{1}{2}(\nu+\gen)}}{h(z,w)^{\nu+\gen}},\ \ z,w\in \Omega.$$
The Bergman space $\Berg$ is a closed subspace of the space of all square-integrable functions $L^2(\Omega,\dv)$. The Bergman projection $P:L^2(\Omega,\dv)\to \Berg$ is given by
\[
    (Pf)(z)=\ipnu{f}{K_z^\nu},\ \ z\in \Omega,\ \ f\in L^2(\Omega,\dv).
\]

The algebras of bounded operators and trace-class operators on $\Berg$ are denoted by by $\bdd$ and $\traceop$ respectively.
Given a function $a:\C^n\to \C$ the Toeplitz operator $T_a$ with symbol $a$ is defined formally by 
\[
   T_af=P(af),\ \ f\in \Berg.
\]
If $a\in \bddf$, then $T_a\in \bdd$ with $\|T_a\|\leq \|a\|_\infty$.
The \textit{Toeplitz algebra} $\toepalg$ 
is the $C^*$-subalgebra of $\bdd$  generated by the Toeplitz operators with symbols in $\bddf$.
Toeplitz operators on Bergman spaces of bounded symmetric domains has been a popular topic over the years \cite{E99, DOQ15, DOQ18, H17, H19, U83}. The rich geometry of $\Omega$ provides a valuable set of tools to study them. 

The Berezin transform is an important notion in analyzing Toeplitz operators. The Berezin transform of a bounded operator $S\in \bdd$ is given by 
$$B(S)(z)=\ipnu{Sk_z^{\nu}}{k_z^{\nu}},\quad z\in \Omega.$$
Then $\|B(S)\|_\infty\leq \|S\|$.
It is well-known that the Berezin transform $B:\bdd\to \bddf$ is an injective map (the proof is similar to the proof of Proposition 3.1 in \cite{Z12}). Oftentimes, the Berezin transform can be viewed as the Banach space adjoint of the Toeplitz map $a\mapsto T_a$, by defining the latter maps on suitable Banach spaces of operators and functions.
The Berezin transform of a function $a$ is defined formally by
$$B(a)(z)=\ipnu{ak_z^{\nu}}{k_z^{\nu}}, \quad z\in \Omega.$$
If $a\in \bddf$, then $\|B(a)\|_\infty\leq \|a\|_\infty$.
We have that the Berezin transform of a Toeplitz operator is the same as the Berezin transform of the symbol, i.e. $B(T_a)=B(a)$, and that $B:\bddf\to \bddf$, $a\mapsto B(a)$ is injective. As a consequence, it follows that the map $\bddf\to \bdd,\ a\mapsto T_a$ is injective.

\begin{remark}
    There is much more to be said about the connections between the Berezin transform, the Toeplitz map, and quantum harmonic analysis. However, not to derail from the main point of this article, we limit our discussion with the hope of providing an in-depth discussion in an upcoming article co-authored with Matthew Dawson and Gestur Olafsson.
\end{remark}

\subsection{Functions on $\Omega$ as functions on the group}
Here we recall a few facts about integration on homogeneous spaces. We point to \cite[Chapter 6]{FO14} for a detailed discussion.
Let $\mu_G$ denote the Haar measure on $G$. Let $\mu_K$ denote the normalized Haar measure on $K$. 
Since $K$ is compact, there is a $G$-invariant measure $\mu_{G/K}$ on $G/K$.
For $f\in \LoneG$, we define the projection of $f$ onto $L^1(G/K)$ by
$$f_K(xK)=\int_K f(xk) \ d\mu_K(k).$$
Then 
$$\int_G f(x) \haar{x}=\int_{G/K} f_K(xK) \ d\mu_{G/K}(xK), \quad f\in \LoneG.$$
Since $\haar{K}<\infty$, we have that $f_K(xK)=f(x)$ for a right $K$-invariant function $f$.

Recall that we make the identification $G/K=\Omega$ via the map $xK\mapsto x\cdot 0$. We define a $G$-invariant measure on $\Omega$ by
$$d\lambda(z)=\frac{1}{h(z,z)^{\gen}}dz,$$
where $p$ is the genus of $\Omega$.
A function on the domain $f:\Omega\to \C$ can be identified with a function on the group, $\Tilde{f}:\G\to \C$ by setting
$$\Tilde{f}(x)=f(x\cdot 0),\quad x\in G.$$
Since all $G$-invariant measures on a homogeneous space are equivalent, by choosing a suitable normalization of $\mu_G$ we get that for $f\in \Lone$,
\begin{equation}\label{eq:integral}
    \int_{G} \Tilde{f}(x) \haar{x}=\int_{\Omega} f(z) \inv{z}.
\end{equation}

Then, we have 
\begin{equation}\label{eq:integral_nu}
    c_\nu\int_{G} \Tilde{f}(x) h(x,x)^{\nu+\gen} \haar{x}=\int_{\Omega} f(z) \dvz{z}.
\end{equation}

\subsection{A representation of $G$ on the Bergman space}

As noted in \cite{FK90}, the relatively discrete series representation of the universal covering group $\tilde{G}$ of $G$ acts on $\Berg$. However, we instead consider a projective representation for simplicity. Another approach to this can be found in \cite{CO21}.

We define the translation of a function $f:\Omega \to\C$ by $x\in G$ is given by
$$(\actfL{x}{f})(w)=f(x^{-1}\cdot w)$$
For $x\in G$, we define the unitary operators $\pinu(x)$ on $\Berg$ by
$$\pinu(x)f=k_{x\cdot 0}^\nu\actfL{x}{f}, \ \ f\in \Berg.$$
The mapping $\pinu$ is an irreducible projective unitary representation of $G$ acting on $\Berg$. For $x,y\in G$,  $$\pinu(x)\pinu(y)=m_\nu(x,y)\pinu(xy)$$
    where $m_\nu(x,y)=\frac{|h(x,y)|^{\nu+\gen}}{h(x,y)^{\nu+\gen}}\in \T$. 
Moreover, the adjoint of $\pinu(x)$ is given by $$\pinu(x)^*=\frac{1}{m(x,x^{-1})}\pinu(x^{-1}).$$

\begin{remark}
    We also point out that $\pinu(x)$ differs from the self-adjoint operators $U_z$ that are often used in the literature on Bergman spaces \cite{H19, E99}. These operators are given by
$$(U_zf)(w)=k_z(w)f(\act{z}{w}),\quad z,w\in \Omega, f\in \Berg.$$
\end{remark}

\subsection{Schur orthogonality relations}\label{subsec:Schur}
Given $f_1,f_2\in \Berg$, the matrix coefficients $\pi^\nu_{f_1,f_2}:G\to\C$ are given by
$$\pi^\nu_{f_1,f_2}(g)=\ipnu{f_1}{\pinu(g)f_2},\quad g\in G.$$
A representation is said to be square-integrable if the matrix coefficient function is square-integrable on the group for some nonzero vectors $f_1$ and $f_2$. It is easy to verify that $\pinu$ is square-integrable, for example, by considering $f_1=f_2=1$. Square-integrable irreducible representations of locally compact unimodular groups satisfy Schur-orthogonality relations (see \cite[Proposition 14.3.3]{D83}), which we state below for $\pi_\nu$. We have that:
\begin{equation}\label{eq:Schur}
    \ip{\pi^\nu_{f_1,f_2}}{\pi^\nu_{f_3,f_4}}_{G}=\frac{1}{d_{\pinu}}\ipnu{f_1}{f_3}\ipnu{f_2}{f_4},\quad f_i\in \Berg,
\end{equation}
where $\langle\cdot, \cdot\rangle_G$ denotes the inner product of $L^2(G,\mu_G)$, and $d_{\pinu}$ is  a constant called the formal dimension of $\pi$. While in \cite{D83} and \cite{S83}, the orthogonality relations are proved for irreducible square-integrable representations instead of an irreducible square-integrable projective representations such as $\pi_\nu$, the proof in \cite{S83} still works to prove this widely used identity.

In the following lemma, we compute the formal dimension $d_{\pinu}$ of $\pi_\nu$.

\begin{lemma}
    The formal dimension $d_{\pinu}$ is given by $d_{\pinu}=c_\nu$.
\end{lemma}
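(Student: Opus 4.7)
The plan is to apply the Schur orthogonality relation \eqref{eq:Schur} to the simplest possible vector, namely the constant function $\mathbf{1} \in \Berg$, and then compute both sides directly. Note that $\|\mathbf{1}\|_\nu^2 = \int_\Omega \dv = 1$ since $\dv$ is a probability measure, so Schur with $f_1 = f_2 = f_3 = f_4 = \mathbf{1}$ yields
\[
\int_G \bigl|\pi^\nu_{\mathbf{1},\mathbf{1}}(x)\bigr|^2 \haar{x} \;=\; \frac{1}{d_{\pinu}}.
\]
So everything boils down to computing the left-hand side.

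First I would compute the matrix coefficient. Since $\actfL{x}{\mathbf{1}} = \mathbf{1}$, the definition of $\pinu(x)$ gives $\pinu(x)\mathbf{1} = \knu_{x\cdot 0}$. Therefore
\[
\pi^\nu_{\mathbf{1},\mathbf{1}}(x) \;=\; \ipnu{\mathbf{1}}{\knu_{x\cdot 0}} \;=\; \frac{\overline{\ipnu{K^\nu_{x\cdot 0}}{\mathbf{1}}}}{\|K^\nu_{x\cdot 0}\|_\nu}.
\]
The reproducing property applied to $\mathbf{1}$ gives $\ipnu{\mathbf{1}}{K^\nu_z} = \mathbf{1}(z) = 1$, and $\|K^\nu_z\|_\nu^2 = K^\nu_z(z) = h(z,z)^{-(\nu+\gen)}$. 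Combining these yields the real-valued expression
\[
\pi^\nu_{\mathbf{1},\mathbf{1}}(x) \;=\; h(x\cdot 0, x\cdot 0)^{(\nu+\gen)/2}.
\]

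Next I would convert the $L^2(G,\mu_G)$-integral to an integral over $\Omega$ using the normalization fixed in \eqref{eq:integral}. Since the integrand $h(x\cdot 0, x\cdot 0)^{\nu+\gen}$ is right $K$-invariant (as $h(k z, k w) = h(z,w)$ for $k \in K$, hence $h(x \cdot 0, x \cdot 0)$ depends only on $xK$), we get
\[
\int_G h(x\cdot 0, x\cdot 0)^{\nu+\gen}\haar{x} \;=\; \int_\Omega h(z,z)^{\nu+\gen}\,d\lambda(z) \;=\; \int_\Omega h(z,z)^{\nu}\,dz,
\]
using $d\lambda(z) = h(z,z)^{-\gen}\,dz$. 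By the defining normalization $c_\nu \int_\Omega h(z,z)^\nu\,dz = 1$, this last integral equals $1/c_\nu$. Equating with the Schur side gives $d_{\pinu} = c_\nu$.

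I don't anticipate any real obstacle: the only mild point is making sure the normalization constant in \eqref{eq:integral} is consistent with the one implicit in Schur's formula, but since the formal dimension is defined so that \eqref{eq:Schur} holds with \emph{this} Haar measure $\mu_G$, the computation above is self-consistent. The constant vector $\mathbf{1}$ is what makes the computation effortless, because it diagonalizes the action of $\ell_x$ and its norm is exactly $1$ by the choice of $c_\nu$.
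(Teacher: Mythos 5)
Your proof is correct and follows essentially the same route as the paper: both apply the Schur orthogonality relation to $f_1=f_2=f_3=f_4=\mathbf{1}$ and then evaluate $\int_G |\pi^\nu_{\mathbf{1},\mathbf{1}}(x)|^2\haar{x}$ by passing to an integral over $\Omega$ and invoking the reproducing property and the normalization of $c_\nu$. The only cosmetic difference is that you compute the matrix coefficient $\pi^\nu_{\mathbf{1},\mathbf{1}}(x)=h(x\cdot 0,x\cdot 0)^{(\nu+\gen)/2}$ in closed form before integrating, whereas the paper keeps the kernel inner products and applies \eqref{eq:integral_nu} directly.
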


\begin{proof}
    Note that by Schur orthogonality relations 
    \begin{align*}
        \ip{\pi^\nu_{1,1}}{\pi^\nu_{1,1}}_G&=\frac{1}{d_{\pinu}}\ipnu{1}{1}\ipnu{1}{1}\\
        &= \frac{1}{d_{\pinu}}.
    \end{align*}
    Also,
    \begin{align*}
        \ip{\pi^\nu_{1,1}}{\pi^\nu_{1,1}}_G&=\int_G \ipnu{1}{\pinu(x)1}\ipnu{\pinu(x)1}{1}\haar{x}\\
        &=\int_G h(x,x)^{\nu+\gen} \ipnu{1}{K_{x\cdot 0}^\nu}\ipnu{K_{x\cdot 0}^\nu}{1}\haar{x} \quad \text{(as $\pinu(x)1=k^\nu_{x\cdot 0}$)} \\
        &=\frac{1}{c_\nu}\int_{\B^n}\ipnu{1}{K_{z}^\nu}\ipnu{K_{z}^\nu}{1} \dvz{z}\ \ \text{(by \ref{eq:integral_nu})}\\ 
        &=\frac{1}{c_\nu}\int_{\B^n} 1 \ \dvz{z} \quad \text{(by the reproducing formula)}\\
        &=\frac{1}{c_\nu},
    \end{align*}
    completing the proof.
\end{proof}


\section{Quantum harmonic analysis on the Bergman space} \label{sec:QHA}

 In \cite{DDMO24}, QHA for the Bergman space over the unit ball is discussed.  Here, we collect some of the QHA notions for bounded symmetric domains. A comprehensive discussion of QHA on Bergman spaces of bounded symmetric domains will be addressed in future work. Now, we proceed to discuss convolutions between functions, convolutions between functions and operators, and then we express Toeplitz operators as convolutions.

Given functions $\psi:G\to \C$ and $a:\Omega\to \C$, we define the convolution $\psi\ast a:\Omega\to \C$ formally by
$$(\psi\ast a)(z)=\int_{G} \psi(x)(\actfL{x}{a})(z)\haar{x}=\int_{G} \psi(x)a(x^{-1}\cdot z)\haar{x}.$$
Let $q,q'\in[1,\infty]$ be s.t. $\frac{1}{q}+\frac{1}{q'}=1$. Then by the Young's inequality we have the following:
If $\psi\in L^q(G)$ and $a\in L^{q'}(\Omega,d\lambda)$ then $\psi\ast a\in \bddf$ with\\ 
    $$\|\psi\ast a\|_\infty\leq \|\psi\|_{q}\|a\|_{q'}.$$

The translation of an operator $S\in \bdd$ by $x\in G$ is given by
$$\actopL{x}{S}=\pinu(x)S\pinu(x)^*.$$

The map $x\mapsto L_x^\nu$ is a representation of $G$ acting on the Banach space $\bdd$, and satisfies
\begin{align}\label{eq:homomorph}
    L_{xy}^\nu=L_x^\nu\circ L_y^\nu, \quad x,y \in G.
\end{align}

Now we define the convolution of a function and an operator. 
Given a function $\psi:G\to \C$ and and an operator $S\in \bdd$, we define $\psi\ast S$ formally by the weak-integral
$$\psi\ast S=\int_G\psi(x) \actopL{x}{S} \haar{x}.$$
Since $G$ is a unimodular group and $\pi_\nu$ is a square-integrable irreducible representation of $G$, the following QHA-Young's inequalities hold:
\begin{enumerate}
    \item If $\psi\in \LoneG$ and $S\in \bdd$, then $\psi\ast S\in \bdd$ with\\
    $\|\psi\ast S\|\leq \|\psi\|_1\|S\|$.
    \item If $\psi\in \bddfG$ and $S\in \traceop$, then $\psi\ast S\in \bdd$ with\\
    $\|\psi\ast S\|\leq \frac{1}{d_{\pinu}}\|\psi\|_\infty\|S\|_\tr$.
\end{enumerate}
The first inequality above is not hard to verify. Since $G$ is a unimodular group and $\pi_\nu$ is a square-integrable irreducible projective representation of $G$, the Schur orthogonality relations hold (see subsection \ref{subsec:Schur}).
The second inequality is a consequence of this. For an explicit proof of this inequality, we point to \cite[Proposition A.7]{DDMO24}.

\begin{remark}
    The inequality (2) above was initially proved by Halvdansson in \cite[Lemma 4.12]{H23}. There, the author extends Werner's QHA, which was originally constructed for the abelian setting of the phase space, to locally compact groups that are not necessarily unimodular. However, when a unimodular group (as is the case with $G$) is considered, the proof is less complicated as seen in \cite[Proposition A.7]{DDMO24}. Moreover, in \cite{H23} and \cite{DDMO24}, a representation is considered instead of a projective representation. Nevertheless, the proofs remain valid, as the Schur orthogonality relations are still valid for irreducible square-integrable projective representations.
\end{remark}

Now due to \ref{eq:homomorph}, the above convolutions are associative. Assume that  $\psi_1,\psi_2:G\to \C$ and $S\in \bdd$ are functions and an operator s.t. the convolutions given below are well defined by the QHA Young's inequalities. Then
$$\psi_1\ast (\psi_2\ast S)=(\psi_1\ast \psi_2)\ast S.$$

\subsection{Toeplitz operators}
Here, we prove that Toeplitz operators are QHA convolutions. Let $\Phi$ be the rank one operator $\Phi=1\otimes 1$. 

\begin{lemma}\label{lem:Toeplitz}
    Let $1\leq p\leq \infty$ and let $a\in \Lp$. Then the Toeplitz operator $T_a$ can be written as
    $$T_a=c_\nu (a\ast \Phi)= a\ast (c_\nu \Phi).$$
\end{lemma}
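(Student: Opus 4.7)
The plan is to reduce the identity to the standard rank-one decomposition $T_a=\int_\Omega a(z)\,K^\nu_z\otimes K^\nu_z\,\dvz{z}$. First I would compute $\actopL{x}{\Phi}$ explicitly. Since $\pinu(x)$ is unitary on $\Berg$, conjugation distributes through rank-one operators: $\pinu(x)(1\otimes 1)\pinu(x)^*=(\pinu(x)1)\otimes(\pinu(x)1)$. The defining formula $\pinu(x)f=\knu_{x\cdot 0}\actfL{x}{f}$ together with $\actfL{x}{1}=1$ gives $\pinu(x)1=\knu_{x\cdot 0}$, so
$$\actopL{x}{\Phi}=\knu_{x\cdot 0}\otimes \knu_{x\cdot 0}.$$

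Next I would absorb the normalization using $\knu_z=h(z,z)^{(\nu+\gen)/2}K^\nu_z$, which yields $\knu_{x\cdot 0}\otimes \knu_{x\cdot 0}=h(x,x)^{\nu+\gen}\,K^\nu_{x\cdot 0}\otimes K^\nu_{x\cdot 0}$. Inserting this into the defining weak integral of $a\ast\Phi$ (with $a$ identified with its lift to $G$) and then applying the transfer formula \eqref{eq:integral_nu} to the trace-class-valued integrand $z\mapsto a(z)K^\nu_z\otimes K^\nu_z$, I obtain
$$c_\nu(a\ast\Phi)=c_\nu\int_G a(x\cdot 0)\,h(x,x)^{\nu+\gen}\,K^\nu_{x\cdot 0}\otimes K^\nu_{x\cdot 0}\haar{x}=\int_\Omega a(z)\,K^\nu_z\otimes K^\nu_z\,\dvz{z}.$$

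Finally I would identify the right-hand side with $T_a$ by testing against $f\in\Berg$: using the reproducing property $\ipnu{f}{K^\nu_z}=f(z)$ and the symmetry $\overline{K^\nu_w(z)}=K^\nu_z(w)$, one has for every $w\in\Omega$
$$\Big(\int_\Omega a(z)\,K^\nu_z\otimes K^\nu_z\,\dvz{z}\Big)(f)(w)=\int_\Omega a(z)f(z)K^\nu_z(w)\,\dvz{z}=\ipnu{af}{K^\nu_w}=(T_af)(w).$$
The second identity $c_\nu(a\ast\Phi)=a\ast(c_\nu\Phi)$ then follows from the linearity of $\actopL{x}{\cdot}$ and of the weak integral.

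The one non-cosmetic point is justifying that the weak integral is well defined uniformly in $p\in[1,\infty]$. The endpoints are handled directly by the two QHA-Young inequalities stated above: $p=\infty$ via $\bddfG\times\traceop\to\bdd$ (using that $\Phi=1\otimes 1$ is trace class) and $p=1$ via $\LoneG\times\bdd\to\bdd$. For intermediate $p$ one either interpolates between these two cases or observes that pairing $a\ast\Phi$ against any fixed rank-one operator reduces the question to a scalar integral controlled by H\"older, after which the computation above identifies the resulting bounded operator with $T_a$.
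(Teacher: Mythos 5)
Your proposal is correct and follows essentially the same route as the paper: compute $\actopL{x}{\Phi}=\pinu(x)1\otimes\pinu(x)1=h(x,x)^{\nu+\gen}K^\nu_{x\cdot 0}\otimes K^\nu_{x\cdot 0}$, transfer the integral from $G$ to $\Omega$ via \eqref{eq:integral_nu}, and identify the resulting rank-one superposition with $T_a$ through the reproducing formula (the paper pairs against two test functions $f_1,f_2$ rather than evaluating at a point, which is an equivalent weak formulation). Your closing remark on well-definedness for intermediate $p$ is a welcome extra precaution that the paper leaves implicit.
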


\begin{proof}
    First note that 
    $$\actopL{x}{\Phi}=\pinu(x)1\otimes \pinu(x)1= h(x,x)^{\nu+\gen} (K_{x\cdot 0}^\nu\otimes K_{x\cdot 0}^\nu).$$
    Note that we consider $a$ as a function on $G$ as usual, by writing $a(x)=a(x\cdot 0)$, for $x\in G$. Then for $f_1,f_2\in \Berg$,
    \begin{align*}
        \ipnu{(a\ast \Phi)f_1}{f_2} &= \int_G a(x\cdot 0) \ipnu{\actopL{x}{\Phi}f_1}{f_2} \haar{x}\\
        &= \int_G h(x,x)^{\nu+\gen} a(x\cdot 0) \ipnu{(K_{x\cdot 0}^\nu\otimes K_{x\cdot 0}^\nu)f_1}{f_2} \haar{x}\\
        &= \frac{1}{c_\nu} \int_{\Omega} a(z) \ipnu{(K_z^\nu\otimes K_z^\nu)f_1}{f_2} \dvz{z}\ \ \text{(by \ref{eq:integral_nu})}\\
        &=\frac{1}{c_\nu}\int_{\Omega} a(z) f_1(z)\overline{f_2(z)} \dvz{z}\\
        &=\frac{1}{c_\nu} \ipnu{af_1}{f_2}\\
        &=\frac{1}{c_\nu} \ipnu{T_af_1}{f_2}.\qedhere
    \end{align*}
\end{proof}

It is well-known that Toeplitz operators with symbols in $\Lone$ are in $\traceop$. This fact also follows from the above lemma and the QHA Young's inequality.
On the other hand, Berger and Coburn proved that the set of all Toeplitz operators with continuous compactly supported symbols forms a dense subset of $\traceop$ \cite[Theorem 9]{BC94}. While this Theorem was explicitly stated for the case of the Fock space, the authors note, as they begin section 3 in \cite{BC94}, that this holds for any Bergman space over an arbitrary bounded domain. Indeed, the main ingredients of their proof are the injectivity of the Berezin transform and standard duality arguments, which do not depend on the nature of the domain. Additionally, we note that such techniques are also commonplace in the literature on QHA (see, for example, Theorem 5.2 in \cite{DDMO24}). Furthermore, as compactly supported continuous functions are contained in $\Lone$, we have that 
Toeplitz operators with symbols in $\Lone$ form a dense subset of $\traceop$ \cite[Theorem 9]{BC94}. We use this result in the following lemma.

\begin{lemma}\label{lem:convol_traceclass}
    We have the following inclusion
    $$\bddfG\ast \traceop\subset \overline{\{T_a\in \bdd \mid a\in \bddf\}}.$$
\end{lemma}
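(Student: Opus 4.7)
\medskip
\noindent\textbf{Plan.} My plan is to combine the Berger--Coburn density of trace-class Toeplitz operators (recalled just above the statement) with the associativity of QHA convolutions and Lemma~\ref{lem:Toeplitz}. Given $\psi\in\bddfG$ and $S\in\traceop$, first pick a sequence $(a_n)\subset\Lone$ with $T_{a_n}\to S$ in trace norm. The heart of the argument is the identity
\[
\psi\ast T_a \;=\; T_{\psi\ast a}, \qquad \psi\in\bddfG,\ a\in\Lone,
\]
which I would derive by chaining Lemma~\ref{lem:Toeplitz} with the associativity law for QHA convolutions:
\[
\psi\ast T_a \;=\; \psi\ast(c_\nu\,a\ast\Phi) \;=\; c_\nu(\psi\ast a)\ast\Phi \;=\; T_{\psi\ast a}.
\]
The ordinary Young inequality for the function convolution ensures $\psi\ast a\in\bddf$ with $\|\psi\ast a\|_\infty\le\|\psi\|_\infty\|a\|_1$, so the final equality is just Lemma~\ref{lem:Toeplitz} applied to this bounded symbol.

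The proof then concludes via the QHA Young inequality~(2): since
\[
\bigl\|\psi\ast S - T_{\psi\ast a_n}\bigr\|
\;=\; \bigl\|\psi\ast(S - T_{a_n})\bigr\|
\;\le\; \tfrac{1}{d_{\pinu}}\|\psi\|_\infty\|S - T_{a_n}\|_\tr
\;\longrightarrow\;0,
\]
the operator $\psi\ast S$ is the norm limit of Toeplitz operators $T_{\psi\ast a_n}$ with symbols in $\bddf$, which is exactly the desired inclusion.

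The only genuine subtlety is the middle step $\psi\ast(a\ast\Phi)=(\psi\ast a)\ast\Phi$ in the chain above, because the two convolutions involved are of different types: $\psi$ lives on $G$, while $a$ was originally defined on $\Omega$. I would handle this by lifting $a$ to its right $K$-invariant avatar $\tilde a$ on $G$, noting that $\psi\ast\tilde a$ is itself right $K$-invariant (and hence agrees with the $\Omega$-convolution of $\psi$ and $a$ under the identification $\Omega=G/K$), and then invoking the associativity statement for two $G$-functions convolved with a bounded operator, already recorded in the paper just above this subsection. The Fubini-type exchange that this requires is legitimate because $\tilde a\in L^1(G)$ (by \eqref{eq:integral}) and $\Phi$ is a rank-one, hence trace-class, operator.
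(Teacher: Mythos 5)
Your proposal is correct and follows essentially the same route as the paper: approximate $S$ in trace norm by Toeplitz operators with $L^1$ symbols, use the QHA Young inequality to transfer that approximation to $\psi\ast S$, and identify $\psi\ast T_a=\psi\ast(a\ast c_\nu\Phi)=(\psi\ast a)\ast c_\nu\Phi=T_{\psi\ast a}$ with $\psi\ast a\in\bddf$ by the function-level Young inequality. Your extra care with the $G$-versus-$\Omega$ identification in the associativity step is a point the paper passes over silently, but it changes nothing in substance.
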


\begin{proof}
    Let $\psi \in \bddfG$, $S\in \traceop$ and let $\epsilon>0$. Since Toeplitz operators whose symbols are in $\Lone$ forms a dense subset of $\traceop$, there is $a\in \Lone$ s.t. 
    $$\|T_a-S\|_\tr<\frac{c_\nu\epsilon }{\|\psi\|_\infty+1}.$$
    Then
    $$\|\psi\ast T_a-\psi\ast S\|=\|\psi\ast (T_a-S)\|\leq \frac{1}{c_\nu}\|\psi\|_\infty\|T_a-S\|_\tr<\epsilon.$$
    Also,
    $$\psi\ast T_a=\psi\ast(a\ast c_\nu\Phi)=(\psi\ast a)\ast c_\nu\Phi=T_{\psi\ast a}.$$
    Finally, we note that $\psi\ast a \in \bddf$. This completes the proof. 
\end{proof}


\section{Weakly localized operators}\label{sec:wloc}

In \cite{IMW13}, the authors introduce the notion of weakly localized operators for both the Bergman space over the unit ball and the Fock space.
In this section, we generalize this definition for Bergman spaces over bounded symmetric domains. Then we prove that weakly localized operators form a self-adjoint algebra containing Toeplitz operators. Hitherto, we have only assumed that $\nu>-1$. Throughout this section, we make the restriction $\nu>-1+(r-1)a/2$.

\subsection{Forelli-Rudin estimates}
   Now we recall the Forelli-Rudin estimates from \cite[Theorem 4.1]{FK90}. Let $t>-1$ and $c\in\R$. Then
   \begin{enumerate}
       \item If $c-(t+\gen)<-(r-1)a/2$, we have that 
   \begin{align}\label{eq:FRone}
       \int_{\Omega} \frac{h(w,w)^t}{h(z,w)^c} dw \leq C_{t,c},\quad z\in \Omega
   \end{align}
   for some $C_{t,c}>0$.
   \item If $c-(t+\gen)>(r-1)a/2$, we have that 
   \begin{align}\label{eq:FRtwo}
       \int_{\Omega} \frac{h(w,w)^t}{h(z,w)^c} dw \leq \frac{C_{t,c}}{h(z,z)^{c-(t+\gen)}},\quad z\in \Omega
   \end{align}
   for some $C_{t,c}>0$.
   \end{enumerate}

\subsection{Weakly localized operators}
The motivation for the following definition of weakly-localized operators are the Forelli-Rudin estimates. Recall the set $\Br(x)$ defined in Subsection \ref{subsec:distance}.

\begin{definition}\label{def:weakly}
    For $S\in \bdd$, $\alpha\in (\frac{\gen-\nu-2}{2},\frac{\nu+b+2}{2})$,  and for $\rho\geq 0$, we define 
    \begin{align}
        \Ione{S}(\rho)=\sup_{x\in G}\int_{G\setminus \Br(x)} |\ipnu{S\pinu(x)1}{\pinu(y)1}|\frac{h(y,y)^{\alpha}}{h(x,x)^{\alpha}} \haar{y}
    \end{align}
    An operator $S$ is $\alpha$-localized if the following conditions are satisfied:
    \begin{enumerate}
        \item $\Ione{S}(0)<\infty$
        \item $\Ione{S^*}(0)<\infty$
        \item $\lim_{\rho\to \infty}\Ione{S}(\rho)=0$
        \item $\lim_{\rho\to \infty}\Ione{S^*}(\rho)=0$.
    \end{enumerate}
\end{definition}
We denote the set of all $\alpha$-localized operators by $\wloc$. 

\begin{remark}
    Note that the interval $(\frac{\gen-\nu-2}{2},\frac{\nu+b+2}{2})$ mentioned above is nonempty as we assumed that $\nu>-1+(r-1)a/2$.
\end{remark}

First, we use Forelli-Rudin estimates to show that the identity operator is weakly localized. The proof of this fact follows similar to the proof of \cite[Lemma 10]{E99}.

\begin{lemma}\label{lem:identity_wloc}
    Let $\alpha\in (\frac{\gen-\nu-2}{2},\frac{\nu+b+2}{2})$ and let $\id\in \bdd$ be the identity operator. Then $\id\in \wloc$.
\end{lemma}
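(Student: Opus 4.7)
The identity is self-adjoint, so conditions (2) and (4) of Definition \ref{def:weakly} follow from (1) and (3). My plan is to express $\Ione{\id}(\rho)$ as an explicit integral over $\Omega$, then use a Bergman isometry to eliminate the $z$-dependence living outside the integrand, and finally invoke the Forelli-Rudin estimates twice: once to establish finiteness at $\rho = 0$, and once (together with a splitting trick) to establish the tail decay as $\rho \to \infty$.

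Since $\pinu(x)1 = \knu_{x\cdot 0}$, the explicit formula for the normalized reproducing kernel gives
$$|\ipnu{\pinu(x)1}{\pinu(y)1}| = \frac{h(x,x)^{(\nu+\gen)/2}\, h(y,y)^{(\nu+\gen)/2}}{|h(x,y)|^{\nu+\gen}}.$$
The integrand of $\Ione{\id}(\rho)$ is right $\K$-invariant in $y$ (since $\pinu(k)1 = 1$ for $k \in \K$, and $h$ is $\K$-invariant), so by \eqref{eq:integral}, writing $z = x\cdot 0$,
$$\Ione{\id}(\rho) = \sup_{z \in \Omega} h(z,z)^{(\nu+\gen)/2 - \alpha} \int_{\Omega \setminus \Dr(z)} \frac{h(w,w)^{(\nu-\gen)/2 + \alpha}}{|h(z,w)|^{\nu+\gen}}\, dw.$$
Pick $\phi_z \in \G$ with $\phi_z \cdot 0 = z$ and substitute $w = \phi_z(u)$; since $\phi_z$ is a Bergman isometry, the domain becomes $\Omega \setminus \Dr(0)$. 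From \eqref{eq:JTdet} one obtains $h(\phi_z u, \phi_z u) = h(z,z)h(u,u)/|h(z,u)|^2$ and $h(z,\phi_z u) = h(z,z)/h(z,u)$; combined with the $\G$-invariance of $d\lambda$, these identities make the $h(z,z)$-powers cancel exactly, leaving
$$\Ione{\id}(\rho) = \sup_{z \in \Omega} \int_{\Omega \setminus \Dr(0)} \frac{h(u,u)^{(\nu-\gen)/2 + \alpha}}{|h(z,u)|^{2\alpha}}\, du.$$

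For (1), set $\rho = 0$ and apply Forelli-Rudin \eqref{eq:FRone} with $t = (\nu-\gen)/2 + \alpha$ and $c = 2\alpha$: the hypothesis $t > -1$ is precisely $\alpha > (\gen-\nu-2)/2$, and $c - (t+\gen) < -(r-1)a/2$ is precisely $\alpha < (\nu+b+2)/2$, both of which hold by the assumed range of $\alpha$. Hence the integral is bounded uniformly in $z$ and $\Ione{\id}(0) < \infty$.

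For (3), the main obstacle is that $|h(z,u)|^{-2\alpha}$ admits no uniform pointwise bound in $z$, so one cannot apply dominated convergence directly. I would circumvent this with a splitting trick: choose $t' \in (-1, t)$ with $c - (t'+\gen) < -(r-1)a/2$ (possible precisely because $\alpha$ lies in the stated interval), factor $h(u,u)^t = h(u,u)^{t-t'} \cdot h(u,u)^{t'}$, pull the first factor outside the integral as a supremum, and apply \eqref{eq:FRone} to the remainder to obtain a uniform bound $C'$. This yields
$$\Ione{\id}(\rho) \leq C' \sup_{u \in \Omega \setminus \Dr(0)} h(u,u)^{t-t'}.$$
By Lemma \ref{lem:balls}(2) each $\Dr(0)$ is precompact, and these balls exhaust $\Omega$ as $\rho \to \infty$, so $\Omega \setminus \Dr(0)$ is eventually contained in any prescribed neighborhood of $\partial\Omega$; since $h(u,u) \to 0$ as $u \to \partial\Omega$ and $t - t' > 0$, this supremum vanishes in the limit, proving $\Ione{\id}(\rho) \to 0$.
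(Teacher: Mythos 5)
Your proof is correct, and it reaches the same key intermediate identity as the paper,
$$\Ione{\id}(\rho)=\sup_{z\in\Omega}\int_{\Omega\setminus \Dr(0)}\frac{h(u,u)^{\alpha+(\nu-\gen)/2}}{|h(z,u)|^{2\alpha}}\,du,$$
though you get there by substituting $w=\phi_z(u)$ in the domain integral rather than by the paper's change of variable $y\mapsto xy$ on the group (these are the same computation in different clothing). The genuine divergence is in how the tail is killed. The paper applies H\"older's inequality with an exponent $q$ chosen slightly above $1$ so that Forelli--Rudin \ref{eq:FRone} still applies to the $q$-th powers, extracting the factor $|\Omega\setminus\Dr(0)|^{1/q'}$, which tends to $0$ by Lemma \ref{lem:balls}(1). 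You instead perturb the exponent on the weight, writing $h(u,u)^{t}=h(u,u)^{t-t'}h(u,u)^{t'}$ with $t'\in(-1,t)$ still admissible for \ref{eq:FRone}, and extract $\sup_{\Omega\setminus\Dr(0)}h(u,u)^{t-t'}$. Both tricks exploit exactly the same slack, namely the strictness of the inequalities defining the interval $(\frac{\gen-\nu-2}{2},\frac{\nu+b+2}{2})$, so neither is more general. One small caveat about your route: it rests on the fact that $h(u,u)\to 0$ as $u\to\partial\Omega$. This is true in the Harish--Chandra realization, since $h(z,z)=\prod_{j}(1-\lambda_j(z)^2)$ with $\lambda_1(z)$ the spectral norm defining $\Omega$ as its open unit ball, so $h(\cdot,\cdot)$ vanishes identically on $\partial\Omega$; but it is not among the properties of $h$ recorded in the paper, so you should cite or justify it. The paper's H\"older argument only needs Lemma \ref{lem:balls}(1) and is self-contained in that respect. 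Your direct verification of $\Ione{\id}(0)<\infty$ is also fine (the paper obtains it as the $\rho=0$ case of the same bound, since $|\Omega|<\infty$).
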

\begin{proof}
    First note that
    \begin{align*}
        \Ione{\id}(\rho)&= \sup_{x\in G}\int_{G\setminus \Br(x)} |\ipnu{\pinu(x)1}{\pinu(y)1}|\frac{h(y,y)^{\alpha}}{h(x,x)^{\alpha}} \haar{y}\\
        &=\sup_{x\in G}\int_{G\setminus \Br(0)} |\ipnu{\pinu(x)1}{\pinu(xy)1}|\frac{h(xy,xy)^{\alpha}}{h(x,x)^{\alpha}} \haar{y},
    \end{align*}
    where the last line follows by a change of variable.
    Also,
    $$\ipnu{\pinu(x)1}{\pinu(xy)1}=\ipnu{1}{\pinu(y)1}=h(y,y)^{(\nu+\gen)/2},$$
    and due to \ref{eq:JTdet}, we have
    $$\frac{h(xy,xy)^{\alpha}}{h(x,x)^{\alpha}}=\frac{h(y,y)^{\alpha}}{|h(x,y)|^{2\alpha}}.$$
    Hence
    \begin{align*}
        \Ione{\id}(\rho)
        &=\sup_{x\in G}\int_{G\setminus \Br(0)} h(y,y)^{(\nu+\gen)/2}\frac{h(y,y)^{\alpha}}{|h(x,y)|^{2\alpha}} \haar{y}\\
        &=\sup_{z\in \Omega}\int_{\Omega\setminus \Dr(0)} \frac{h(w,w)^{\alpha+(\nu-\gen)/2}}{|h(z,w)|^{2\alpha}} \ dw\ \ \text{(by \ref{eq:integral})}. \\
    \end{align*}
    
    Let $q\in \big(1,\frac{\gen-(r-1)a/2}{\alpha+(\gen-\nu)/2}\big)$. To verify that the interval is nonempty, note that $$\alpha<(\nu+b+2)/2=(\gen+\nu-(r-1)a)/2.$$ This implies that
    $$\frac{\gen-(r-1)a/2}{\alpha+(\gen-\nu)/2}>1.$$
    Let $q'>1$ s.t. $\frac{1}{q}+\frac{1}{q'}=1$. 
    
    Then by Holder's inequality,
    \begin{align*}
         \Ione{\id}(\rho)&\leq   |\Omega\setminus \Dr(0)|^\frac{1}{q'} \Big(\int_{\Omega} \frac{h(w,w)^{q(\alpha+(\nu-\gen)/2)}}{|h(z,w)|^{2q\alpha}} \ dw\Big)^\frac{1}{q}\\
            &\leq |\Omega\setminus \Dr(0)|^\frac{1}{q'} C_{t,c}^{\frac{1}{q}},
    \end{align*}
    where the last line follows by applying Forelli-Rudin estimate \ref{eq:FRone} with $t=q(\alpha+(\nu-\gen)/2)$ and $c=2q\alpha$. Also, we verify
    \begin{align*}
        t&=q(\alpha+(\nu-\gen)/2)\\
        &\geq \alpha+(\nu-\gen)/2\\
        &> (\gen-\nu-2)/2+(\nu-\gen)/2\\
        &=-1,
    \end{align*}
    and
        \begin{align*}
            c-(\gen+t)&=(\alpha+(\gen-\nu)/2)q-\gen\\
            &<(\alpha+(\gen-\nu)/2)\frac{\gen-(r-1)a/2}{\alpha+(\gen-\nu)/2}-\gen \\
            &=-(r-1)a/2,
        \end{align*}
        ensuring the validity of the application of the Forelli-Rudin estimate \ref{eq:FRone}. Since $|\Omega\setminus \Dr(0)|\to 0$, as $\rho\to \infty$, it follows that
        $$\lim_{\rho \to \infty}\Ione{\id}(\rho)=0.$$
        Hence $\id\in \wloc$.
\end{proof}

\subsection{The algebra of weakly localized operators}

Here we prove that $\wloc$ is a self-adjoint subalgebra of $\bdd$ that contains the set of all Toeplitz operators. However, $\wloc$ may not be closed. But the closure $\overline{\wloc}$ is a $C^*$-algebra. Our proof relies on the following lemma.

For $S,T\in \bdd$, $\alpha\in (\frac{\gen-\nu-2}{2},\frac{\nu+b+2}{2})$ and $\rho\geq 0$, we define the integral $\J{S}{T}(\rho)$ by
       \begin{align*}
           \J{S}{T}(\rho)=\sup_{x\in G}\int_{G} &|\ipnu{S\pinu(x)1}{\pinu(g)1}| \\
           &\times \int_{G\setminus \Br(x)}  |\ipnu{\pinu(g)1}{T\pinu(y)1}| \frac{h(y,y)^{\alpha}}{h(x,x)^{\alpha}} \haar{y} \haar{g} .
       \end{align*}

    \begin{lemma}\label{lem:Jbeta}
        Let $S,T\in \bdd$, $\alpha\in (\frac{\gen-\nu-2}{2},\frac{\nu+b+2}{2})$ and $\rho\geq 0$.
        Then 
        $$\J{S}{T}(\rho)\leq \I{T}(\rho/2) \I{S}(0)+\I{T}(0)\I{S}(\rho/2).$$
    \end{lemma}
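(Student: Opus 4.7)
The plan is to introduce a ``middle'' point $g$ via weight factorization, split the outer integration at radius $\rho/2$, and apply Lemma~\ref{lem:balls}(4) to the appropriate half. First I would factor
\[
\frac{h(y,y)^\alpha}{h(x,x)^\alpha} \;=\; \frac{h(g,g)^\alpha}{h(x,x)^\alpha}\cdot \frac{h(y,y)^\alpha}{h(g,g)^\alpha},
\]
absorbing the left factor into the outer $g$-integrand and the right factor into the inner $y$-integrand. Each factor then carries exactly the weight appearing in the definition of $\I{\cdot}(\rho)$.

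Next I would split the outer integration as $\int_G = \int_{\Brr{\rho/2}(x)} + \int_{G\setminus \Brr{\rho/2}(x)}$. On $\Brr{\rho/2}(x)$, Lemma~\ref{lem:balls}(4) gives the inclusion $G\setminus \Br(x)\subset G\setminus \Brr{\rho/2}(g)$, which enlarges the exclusion domain of the inner integral. The adjoint identity $\ipnu{\pinu(g)1}{T\pinu(y)1} = \ipnu{T^*\pinu(g)1}{\pinu(y)1}$ then recasts the inner integral as
\[
\int_{G\setminus \Brr{\rho/2}(g)} |\ipnu{T^*\pinu(g)1}{\pinu(y)1}|\,\frac{h(y,y)^\alpha}{h(g,g)^\alpha}\,\haar{y},
\]
which is bounded by $\I{T}(\rho/2)$ uniformly in $g$ (with $\I{T}$ understood in the symmetric sense also covering the $T^*$ variant). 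The remaining outer integration over $\Brr{\rho/2}(x)\subset G$ is dominated by the full-space integral defining $\I{S}(0)$, producing the first summand $\I{T}(\rho/2)\,\I{S}(0)$.

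On the complementary piece $g\in G\setminus \Brr{\rho/2}(x)$ I would simply drop the exclusion on the inner integral and extend it over all of $G$. After the same adjoint identity this is bounded by $\I{T}(0)$ uniformly in $g$, while the outer $g$-integration over $G\setminus \Brr{\rho/2}(x)$ now exactly matches the quantity defining $\I{S}(\rho/2)$. This gives the second summand $\I{T}(0)\,\I{S}(\rho/2)$. Summing the two pieces and taking the supremum in $x$ yields the claimed inequality.

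The main subtlety is really only bookkeeping: one must orchestrate the weight factorization, the adjoint swap that moves $T$ onto $\pinu(g)1$, and the application of Lemma~\ref{lem:balls}(4) on precisely the half where the inclusion is available so that each resulting integrand exactly matches the definition of $\I{\cdot}$. Beyond that the argument uses only Tonelli's theorem and monotonicity of the integral in its domain.
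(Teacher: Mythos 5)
Your proposal is correct and follows essentially the same route as the paper's proof: split the outer $g$-integral at radius $\rho/2$, factor the weight as $\frac{h(g,g)^\alpha}{h(x,x)^\alpha}\cdot\frac{h(y,y)^\alpha}{h(g,g)^\alpha}$, apply Lemma~\ref{lem:balls}(4) on the near piece, and bound the far piece by extending the inner integral to all of $G$. Your parenthetical observation that the inner integral is literally controlled by the $T^*$ variant, i.e.\ $\I{T^*}(\rho/2)$ rather than $\I{T}(\rho/2)$, is a detail the paper glosses over, but since the definition of weak localization is symmetric in $T$ and $T^*$ this is harmless in every application.
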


    \begin{proof}
        First, note that
        \begin{align*}
            \J{S}{T} (\rho)
            &= \sup_{x\in G} \  (\Aint(x,\rho)+\Bint(x,\rho))
        \end{align*}
        where
        \begin{align*}
            \Aint(x,\rho)=\int_{\Brr{\rho/2}(x)} |\ipnu{S\pinu(x)1}{\pinu(g)1}| \int_{G\setminus \Br(x)}  |\ipnu{\pinu(g)1}{T\pinu(y)1}| \frac{h(y,y)^{\alpha}}{h(x,x)^{\alpha}} \haar{y} \haar{g}
        \end{align*}
        and 
        \begin{align*}
            \Bint(x,\rho)=\int_{G\setminus \Brr{\rho/2}(x)} |\ipnu{S\pinu(x)1}{\pinu(g)1}| \int_{G\setminus \Br(x)}  |\ipnu{\pinu(g)1}{T\pinu(y)1}| \frac{h(y,y)^{\alpha}}{h(x,x)^{\alpha}} \haar{y} \haar{g}.
        \end{align*}
        Note that for $g\in \Brr{\rho/2}(x)$, we have $G\setminus \Br(x)\subset G\setminus \Brr{\rho/2}(g)$ by Lemma \ref{lem:balls}. 
        Therefore,
        \begin{align*}
            \Aint(x,\rho)&\leq \int_{\Brr{\rho/2}(x)} |\ipnu{S\pinu(x)1}{\pinu(g)1}|  \int_{G\setminus \Brr{\rho/2}(g)}  |\ipnu{\pinu(g)1}{T\pinu(y)1}| \frac{h(y,y)^{\alpha}}{h(x,x)^{\alpha}} \haar{y} \haar{g}\\
            &=\int_{\Brr{\rho/2}(x)} |\ipnu{S\pinu(x)1}{\pinu(g)1}| \frac{h(g,g)^{\alpha}}{h(x,x)^{\alpha}}\\
            & \hspace{4cm} \times
            \int_{G\setminus \Brr{\rho/2}(g)}  |\ipnu{\pinu(g)1}{T\pinu(y)1}| \frac{h(y,y)^{\alpha}}{h(g,g)^{\alpha}} \haar{y} \haar{g}\\
            &< \I{T}(\rho/2) \I{S}(0).
        \end{align*}
        Also,
    \begin{align*}
        \Bint(x,\rho)&\leq \int_{G\setminus\Brr{\rho/2}(x)} |\ipnu{S\pinu(x)1}{\pinu(g)1}| \int_{G}  |\ipnu{\pinu(g)1}{T\pinu(y)1}| \frac{h(y,y)^{\alpha}}{h(x,x)^{\alpha}} \haar{y} \haar{g}\\
        &=\int_{G\setminus\Brr{\rho/2}(x)} |\ipnu{S\pinu(x)1}{\pinu(g)1}| \frac{h(g,g)^{\alpha}}{h(x,x)^{\alpha}}\\
        &\hspace{4cm} \times
            \int_{G}  |\ipnu{\pinu(g)1}{T\pinu(y)1}| \frac{h(y,y)^{\alpha}}{h(g,g)^{\alpha}} \haar{y} \haar{g}\\
        &\leq \I{T}(0)\I{S}(\rho/2).
    \end{align*}
    Therefore,
    $\J{S}{T}(\rho)\leq \I{T}(\rho/2) \I{S}(0)+\I{T}(0)\I{S}(\rho/2).$
    \end{proof}

\begin{proposition}
    For each $\alpha\in (\frac{\gen-\nu-2}{2},\frac{\nu+b+2}{2})$, we have that  $\wloc$ is a self-adjoint subalgebra of $\bdd$.
\end{proposition}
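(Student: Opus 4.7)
The plan is to verify in turn the three required closure properties: under adjoints, under linear combinations, and under composition. The first two are routine. Adjoint closure is automatic because the four defining conditions for $\wloc$ are symmetric in $S$ and $S^*$. For linear combinations, the triangle inequality under the integral gives $\I{\lambda S_1 + \mu S_2}(\rho) \leq |\lambda|\I{S_1}(\rho) + |\mu|\I{S_2}(\rho)$, with the same bound for adjoints, so finiteness at $\rho = 0$ and decay as $\rho \to \infty$ both transfer. All the substantive work is in closure under composition.

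For $S, T \in \wloc$, the key device will be the coherent-state resolution of identity arising from the Schur orthogonality relations of Subsection \ref{subsec:Schur}: weakly, for every $f \in \Berg$,
$$f = c_\nu \int_G \ipnu{f}{\pinu(g)1}\,\pinu(g)1\,\haar{g}.$$
Applying this to $f = T\pinu(x)1$ and pairing against $S^*\pinu(y)1$ will yield
$$\ipnu{ST\pinu(x)1}{\pinu(y)1} = c_\nu \int_G \ipnu{T\pinu(x)1}{\pinu(g)1}\,\ipnu{S\pinu(g)1}{\pinu(y)1}\,\haar{g}.$$
Taking absolute values, weighting by $h(y,y)^\alpha/h(x,x)^\alpha$, integrating $y$ over $G\setminus\Br(x)$, and interchanging integrations via Fubini, I will rewrite the result (using $|\ipnu{\pinu(g)1}{S^*\pinu(y)1}| = |\ipnu{S\pinu(g)1}{\pinu(y)1}|$) as precisely the integrand defining $\J{T}{S^*}(\rho)$. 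This will give
$$\I{ST}(\rho) \leq c_\nu \J{T}{S^*}(\rho) \leq c_\nu\bigl(\I{S^*}(\rho/2)\I{T}(0) + \I{S^*}(0)\I{T}(\rho/2)\bigr),$$
where the second inequality is Lemma \ref{lem:Jbeta} applied with the roles $(S,T) \mapsto (T, S^*)$. Setting $\rho = 0$ delivers $\I{ST}(0) < \infty$, while the decay hypotheses on $\I{S^*}(\rho/2)$ and $\I{T}(\rho/2)$ together with the finiteness of $\I{S^*}(0)$ and $\I{T}(0)$ give $\I{ST}(\rho) \to 0$. A parallel reduction applied to $(ST)^* = T^*S^*$, swapping $S \leftrightarrow T^*$ and $T \leftrightarrow S^*$, will bound $\I{(ST)^*}(\rho)$ by $c_\nu \J{S^*}{T}(\rho)$, controlled analogously. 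Altogether, $ST \in \wloc$.

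The main obstacle is the product step: the bookkeeping that realizes $\I{ST}$ as a multiple of $\J{T}{S^*}$ so that Lemma \ref{lem:Jbeta} can be invoked as a black box. The resolution-of-identity expansion and the interchange of integration are each routine, but they must be aligned carefully with the precise definition of $\J{\cdot}{\cdot}$ given just before the lemma; once this identification is in place, the rest of the argument is a matter of reading off the two factors from the stated bound.
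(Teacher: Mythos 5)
Your proposal is correct and follows essentially the same route as the paper: expanding $\ipnu{ST\pinu(x)1}{\pinu(y)1}$ via the reproducing formula (coherent-state resolution of identity), applying Fubini to identify the result with $\J{T}{S^*}(\rho)$, and invoking Lemma \ref{lem:Jbeta}. The only cosmetic difference is that you carry the constant $c_\nu$ explicitly through the expansion, which does not affect the finiteness or decay conclusions.
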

\begin{proof}
    It is clear that $\wloc$ is a self-adjoint linear subspace of $\bdd$. To prove $\wloc$ is an algebra, let $S,T\in \wloc$. Then for $\rho\geq 0$,
    \begin{align*}
        \Ione{ST}(\rho)&=\sup_{x\in G}\int_{G\setminus \Br(x)} |\ipnu{T\pinu(x)1}{S^*\pinu(y)1}|\frac{h(y,y)^{\alpha}}{h(x,x)^{\alpha}} \haar{y}\\
        &=\sup_{x\in G}\int_{G\setminus \Br(x)} \Big| \int_G  \ipnu{T\pinu(x)1}{\pinu(g)1}\ipnu{\pinu(g)1}{S^*\pinu(y)1}  \haar{g}\Big|\\
        &\hspace{8cm} \times \frac{h(y,y)^{\alpha}}{h(x,x)^{\alpha}} \haar{y} \\
        &\hspace{4cm}\text{(by \ref{eq:integral_nu} and the reproducing formula)}\\
        &\leq\sup_{x\in G}\int_{G} |\ipnu{T\pinu(x)1}{\pinu(g)1}|  \\
        &\hspace{2cm} \times \int_{G\setminus \Br(x)}  |\ipnu{\pinu(g)1}{S^*\pinu(y)1}| \frac{h(y,y)^{\alpha}}{h(x,x)^{\alpha}} \haar{y} \haar{g} \\
        &\hspace{4cm}\text{(by Fubini's theorem)}\\
        &=\J{T}{S^*}(\rho)\\
        &\leq \Ione{T}(\rho/2) \Ione{S^*}(0)+\Ione{T}(0)\Ione{S^*}(\rho/2) \quad \text{(by Lemma \ref{lem:Jbeta})}.
    \end{align*}
    Then, since $S,T\in \wloc$,
    $\lim_{\rho\to \infty} \Ione{ST}(\rho)=0.$
    Therefore, $ST\in \wloc$.
\end{proof}

\subsection{Toeplitz operators are weakly localized}
In this subsection, we prove that Toeplitz operators with $L^\infty$-symbols are weakly localized.
\begin{proposition}\label{prop:Toeplitz_wloc}
        Let $\alpha\in (\frac{\gen-\nu-2}{2},\frac{\nu+b+2}{2})$ and $a\in \bddf$. Then $T_a\in\wloc$.
    \end{proposition}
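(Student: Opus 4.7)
The plan is to reduce the statement to the identity case, already settled in Lemma~\ref{lem:identity_wloc}, using only the crude pointwise bound $|a(w)| \leq \|a\|_\infty$. Writing $u := x\cdot 0$ and $v := y\cdot 0$, and using $T_a = PM_a$ together with the reproducing formula,
\[
|\ipnu{T_a k_u^\nu}{k_v^\nu}| = \left| \int_\Omega a(w)\, k_u^\nu(w)\, \overline{k_v^\nu(w)}\, \dv(w) \right| \leq \|a\|_\infty \int_\Omega |k_u^\nu(w)|\, |k_v^\nu(w)|\, \dv(w).
\]
Substituting this into the definition of $\Ione{T_a}(\rho)$, converting the outer integral over $G$ to an integral over $\Omega$ via \ref{eq:integral_nu} (noting that the integrand is right $K$-invariant in $y$), and then applying Fubini, one rewrites $\Ione{T_a}(\rho)$ as a weighted integral over $\Omega \times \Omega$ in the variables $w$ and $v$ whose integrand depends only on the explicit factors $h(u,u)$, $h(v,v)$, $h(w,w)$, $|h(u,w)|$, $|h(v,w)|$, and the indicator of $\Omega \setminus B_\rho(u)$ in $v$.

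The key step is then a dyadic split of the outer $w$-integral at radius $\rho/2$. On $B_{\rho/2}(u)$, the triangle inequality forces $\Omega \setminus B_\rho(u) \subset \Omega \setminus B_{\rho/2}(w)$, so the tail integral in $v$---call it $J(w, \rho/2)$---becomes a tail centered at $w$ rather than at $u$. The crucial observation, and the only place where Lemma~\ref{lem:identity_wloc} is invoked, is that the correct normalization $h(w,w)^{(\gen+\nu)/2 - \alpha}\, J(w, R)$ reproduces exactly the pre-supremum integrand defining $\Ione{\id}(R)$, so
\[
h(w,w)^{(\gen+\nu)/2 - \alpha}\, J(w, R) \leq \Ione{\id}(R)
\]
pointwise in $w$. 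On $\Omega \setminus B_{\rho/2}(u)$, I would bound the tail integral trivially by the full $v$-integral and apply the Forelli-Rudin estimate \ref{eq:FRtwo}; its exponent hypotheses match exactly the allowed range $\alpha \in ((\gen - \nu - 2)/2, (\nu + b + 2)/2)$. A final application of Forelli-Rudin to the remaining outer $w$-integral folds both pieces into a common form.

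Putting everything together should yield an estimate of the shape
\[
\Ione{T_a}(\rho) \leq C\, \|a\|_\infty\, \Ione{\id}(\rho/2)
\]
for a constant $C > 0$ depending only on $\nu$ and $\alpha$. Since $\Ione{\id}(\rho/2) \to 0$ as $\rho \to \infty$ by Lemma~\ref{lem:identity_wloc}, this simultaneously gives conditions (1) and (3) of Definition~\ref{def:weakly} for $T_a$. Because $T_a^* = T_{\bar a}$ and $\bar a \in \bddf$, conditions (2) and (4) follow by replaying the identical argument with $\bar a$ in place of $a$. The main obstacle I anticipate is purely exponent bookkeeping: tracking the powers of $h(u,u)$, $h(v,v)$, and $h(w,w)$ through the substitution, Fubini swap, and dyadic split, so that the integrand in the inner region reproduces that of $\Ione{\id}(\rho/2)$ exactly and so that the Forelli-Rudin hypotheses remain satisfied in both of their invocations.
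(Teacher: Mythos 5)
Your proposal is correct and follows essentially the same route as the paper: after the pointwise bound $|a|\leq\|a\|_\infty$, your double integral over $\Omega\times\Omega$ is exactly the quantity $c_\nu\|a\|_\infty\,\J{\id}{\id}(\rho)$ written in domain coordinates, and your dyadic split at radius $\rho/2$ together with the triangle-inequality inclusion is precisely the proof of Lemma~\ref{lem:Jbeta} specialized to $S=T=\id$, yielding the paper's bound $\Ione{T_a}(\rho)\leq 2c_\nu\|a\|_\infty\,\Ione{\id}(\rho/2)\,\Ione{\id}(0)$. The only difference is that you re-derive that lemma inline (and fold the finiteness of $\Ione{\id}(0)$ into a direct Forelli--Rudin application) rather than citing it.
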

    \begin{proof}
        We have that $(T_a)^*=T_{\Bar{a}}$. Therefore, it is enough to verify only the statements (1) and (3) in Definition \ref{def:weakly}. Note that
        \begin{align*}
            \Ione{T_a}(\rho)&=\sup_{x\in G}\int_{G\setminus \Br(x)} |\ipnu{T_a\pinu(x)1}{\pinu(y)1}|\frac{h(y,y)^{\alpha}}{h(x,x)^{\alpha}} \haar{y}.
        \end{align*}
        Since $T_a=c_\nu(a\ast\Phi)$, by Lemma \ref{lem:Toeplitz}, we have
        \begin{align*}
            \ipnu{T_a\pinu(x)1}{\pinu(y)1} &=c_\nu\ipnu{(a\ast\Phi)\pinu(x)1}{\pinu(y)1}\\
            &=c_\nu\int_G a(g\cdot 0) \ipnu{\actopL{g}{\Phi}\pinu(x)1}{\pinu(y)1}\haar{g}\\
            &=c_\nu\int_G a(g\cdot 0) \ipnu{(\pinu(g)1\otimes\pinu(g)1)\pinu(x)1}{\pinu(y)1}\haar{g}\\
            &=c_\nu\int_G a(g\cdot 0) \ipnu{\pinu(x)1}{\pinu(g)1}\ipnu{\pinu(g)1}{\pinu(x)1}\haar{g}.
        \end{align*}
        Hence,
        \begin{align*}
            \Ione{T_a}(\rho)&\leq c_\nu \|a\|_\infty \sup_{x\in G}\int_{G\setminus \Br(x)} \\
            &\hspace{.5cm} \times\int_G |\ipnu{\pinu(x)1}{\pinu(g)1}\ipnu{\pinu(g)1}{\pinu(x)1}|\haar{g}\frac{h(y,y)^{\alpha}}{h(x,x)^{\alpha}} \haar{y}\\
            &=c_\nu \|a\|_\infty\J{\id}{\id}(\rho) \hspace{3cm}  \text{(by Tonelli's theorem)}\\
            &\leq 2 c_\nu \|a\|_\infty \Ione{\id}(\rho/2) \Ione{\id}(0) \hspace{2cm}  \text{(by Lemma \ref{lem:Jbeta})}.
        \end{align*}
        This completes the proof, as $\id$ is weakly localized by Lemma \ref{lem:identity_wloc}.  
    \end{proof}


\section{The Toeplitz algebra}\label{sec:Toeplitz}
In \cite{X15}, Xia proves that Toeplitz operators are norm dense in the Toeplitz algebra of the unweighted Bergman space $\cA^2(\B^n)$ over the unit ball. For this, he uses weakly localized operators.
In this section, our main goal is to generalize this theorem for the weighted Bergman space $\Berg$ over a bounded symmetric domain. Let
$$\toep:=\{T_a\in \bdd \mid a\in \bddf\}.$$
Since $\overline{\wloc}$ is a $C^*$-algebra containing the set of all Toeplitz operators with $L^\infty$-symbols, it is sufficient to prove
$$\wloc\subset \overline{\toep}.$$
While we use techniques from quantum harmonic analysis that are quite natural to bounded symmetric domains, the main ideas at play are not entirely different from Xia's proof for the unit ball. 

The following integral representation of a bounded operator is what motivates the rest of the proof.
\begin{lemma}\label{lem:integral_S}
    Let $S\in \bdd$. Then
    $$S= c_\nu^2\int_G\int_G \ipnu{S\pinu(y)1}{\pinu(x)1}\  (\pinu(x)1\otimes \pinu(y)1) \haar{x}\haar{y}.$$
\end{lemma}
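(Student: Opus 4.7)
The plan is to derive this identity by applying the weak resolution of the identity coming from Schur orthogonality twice. From the Schur orthogonality relation \eqref{eq:Schur} with $f_2=f_3=1$ and $d_{\pi_\nu}=c_\nu$, we obtain for every $f,g\in\Berg$
\begin{align*}
    \ipnu{f}{g}=c_\nu\int_{G}\ipnu{f}{\pinu(x)1}\ipnu{\pinu(x)1}{g}\haar{x},
\end{align*}
which is the usual weak reproducing/resolution-of-identity formula associated with the square-integrable projective representation $\pinu$. Rewritten in operator form, this says
\begin{align*}
    \id=c_\nu\int_G \pinu(x)1\otimes\pinu(x)1\haar{x}
\end{align*}
as a weak integral on $\Berg$.

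I would then compute the sesquilinear form $(f,g)\mapsto\ipnu{Sf}{g}$ by inserting this resolution of identity twice. First, writing $f=c_\nu\int_G\ipnu{f}{\pinu(y)1}\pinu(y)1\haar{y}$ (weakly) and using boundedness of $S$ gives
\begin{align*}
    \ipnu{Sf}{g}=c_\nu\int_G\ipnu{f}{\pinu(y)1}\ipnu{S\pinu(y)1}{g}\haar{y}.
\end{align*}
Next, I would apply the resolution of identity to the vector $S\pinu(y)1$, producing
\begin{align*}
    \ipnu{S\pinu(y)1}{g}=c_\nu\int_G\ipnu{S\pinu(y)1}{\pinu(x)1}\ipnu{\pinu(x)1}{g}\haar{x}.
\end{align*}
Substituting and recognizing $\ipnu{f}{\pinu(y)1}\ipnu{\pinu(x)1}{g}=\ipnu{(\pinu(x)1\otimes\pinu(y)1)f}{g}$ yields
\begin{align*}
    \ipnu{Sf}{g}=c_\nu^2\int_G\int_G\ipnu{S\pinu(y)1}{\pinu(x)1}\ipnu{(\pinu(x)1\otimes\pinu(y)1)f}{g}\haar{x}\haar{y},
\end{align*}
which is exactly the claimed identity interpreted as a weak operator integral.

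The main issue is justifying the interchange of the inner product with the iterated integral and the application of Fubini. I would handle this by first fixing $f,g\in\Berg$ so that all inner products appearing are matrix coefficients of $\pinu$; Schur orthogonality shows each of the iterated integrals converges absolutely (the relevant matrix coefficients lie in $L^2(G)$, so their pointwise product lies in $L^1(G)$), which legitimizes Fubini and the exchange with the bounded operator $S$. Once the identity is established in the weak sense for all $f,g$, it defines $S$ as a weak integral of the rank-one operators $\pinu(x)1\otimes\pinu(y)1$ against the coefficient kernel $\ipnu{S\pinu(y)1}{\pinu(x)1}$, giving the stated formula.
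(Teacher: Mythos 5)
Your proof is correct and is essentially the paper's own argument: the paper writes $S=\id\, S\, \id = T_1 S T_1 = c_\nu^2(1\ast\Phi)S(1\ast\Phi)$ and expands both convolutions, which is exactly your double insertion of the resolution of the identity $\id = c_\nu\int_G \pinu(x)1\otimes\pinu(x)1\haar{x}$ (you obtain this from Schur orthogonality --- where the choice should read $f_2=f_4=1$ rather than $f_2=f_3=1$ --- while the paper obtains it from $\id=T_1$ via Lemma \ref{lem:Toeplitz}). One small caution: absolute convergence of each single integral does not by itself justify Fubini for the product measure --- the double integral need not converge absolutely, as the paper itself warns after \eqref{eq:SO} --- but you do not actually need Fubini here, since substituting the second resolution of the identity into the first directly produces the iterated integral in the stated order, which is how the lemma must be read.
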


\begin{proof}
    Since the identity operator can be written as the Toeplitz operator whose symbol is the constant function one, we have that
    $$S=\id S \id= T_1 S T_1=c_\nu^2(1\ast \Phi)S(1\ast \Phi).$$
    Therefore,
    \begin{align*}
        S&=c_\nu^2\int_G (1\ast \Phi)S\actopL{y}{\Phi} \haar{y}\\
        &= c_\nu^2\int_G \int_G \actopL{x}{\Phi}S\actopL{y}{\Phi} \haar{x}\haar{y}\\
        &= c_\nu^2\int_G \int_G (\pinu(x)1\otimes\pinu(x)1)S(\pinu(y)1\otimes\pinu(y)1) \haar{x}\haar{y}\\
        &= c_\nu^2\int_G\int_G \ipnu{S\pinu(y)1}{\pinu(x)1}\  (\pinu(x)1\otimes \pinu(y)1) \haar{x}\haar{y}.\qedhere
    \end{align*}
\end{proof}

Given an operator $S\in \bdd$ and $O\subset G$, we formally define $S_O$ by the weak-integral
\begin{align}\label{eq:SO}
    S_O=c_\nu^2\int_{G\times G} \chi_{O}(x) \ipnu{S\pinu(y)1}{\pinu(yx)1}\  (\pinu(yx)1\otimes \pinu(y)1) \ d\mu_{G\times G}(x,y).
\end{align}
It is important to observe that the above integral with respect to the product measure may not exist. The definition of $S_O$ is motivated by Lemma \ref{lem:integral_S}. In the case where $O=G$, if the integral $S_G$ exists as a bounded operator, then $S_G$ equals $S$. This will be proved later in Proposition \ref{prop:S_G}.

For $\rho\geq 0$, we make the identification $\Br:=\Br(0)$.
Our goal is to prove the following:
\begin{enumerate}
    \item For any $S\in \bdd$, $S_{\Br}$ exists and $S_{\Br}\in \overline{\toep}$.
    \item For $S\in \wloc$,  $\lim_{\rho\to \infty} S_{\Br}=S$ in operator norm.
\end{enumerate}
This implies our main theorem.

\subsection{Approximations by Toeplitz operators}
Here, we provide the first part of our proof, namely, we show that for any $S\in \bdd$, the operator $S_{\Br}$ exists and can be approximated by Toeplitz operators. For this, we require several lemmas.
In the lemma below, we prove that $S_O$ exists whenever $O$ is of finite Haar measure.
\begin{lemma}\label{lem:finitemeasure}
    Let $S\in \bdd$ and let $O\subset G$ s.t. $\mu_G(O)<\infty$. Then $S_{O}$ exists as a bounded operator. Moreover, we can express $S_O$ as
    $$S_O=c_\nu^2\int_O a_{S,x} \ast \Phi_{x} \haar{x}$$
    where $a_{S,x}\in \bddfG$ is given by
    $$a_{S,x}(y)=\ipnu{S\pinu(y)1}{\pinu(y)\knu_{x\cdot 0}},\ \ y\in G$$
    and $\Phi_x=\knu_{x\cdot 0}\otimes 1$, for $x\in \Omega$.
\end{lemma}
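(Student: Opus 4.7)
The plan is to manipulate the integrand in \ref{eq:SO} so that the $x$- and $y$-integrations decouple into the claimed convolution form, justify Fubini, then check that the outer integral converges in operator norm. The heart of the argument is a phase cancellation. Using the cocycle identity $\pinu(y)\pinu(x)=m_\nu(y,x)\pinu(yx)$ with $|m_\nu(y,x)|=1$, one has $\pinu(yx)1=m_\nu(y,x)^{-1}\pinu(y)\knu_{x\cdot 0}$, so using the elementary identity $\pinu(y)(u\otimes v)\pinu(y)^*=(\pinu(y)u)\otimes(\pinu(y)v)$ together with $\Phi_x=\knu_{x\cdot 0}\otimes 1$,
\[
\pinu(yx)1\otimes \pinu(y)1 = m_\nu(y,x)^{-1}\,\actopL{y}{\Phi_x},
\]
while
\[
\ipnu{S\pinu(y)1}{\pinu(yx)1}=\overline{m_\nu(y,x)^{-1}}\,\ipnu{S\pinu(y)1}{\pinu(y)\knu_{x\cdot 0}}=m_\nu(y,x)\,a_{S,x}(y),
\]
since $\overline{m^{-1}}=m$ for $m\in\T$. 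The unimodular factors cancel, leaving $a_{S,x}(y)\,\actopL{y}{\Phi_x}$ as the clean integrand.

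Next, I would justify Fubini by verifying absolute integrability. Testing against arbitrary $f,g\in\Berg$, the identity $\ipnu{\actopL{y}{\Phi_x}f}{g}=\ipnu{f}{\pinu(y)1}\,\ipnu{\pinu(y)\knu_{x\cdot 0}}{g}$ together with the uniform bound $|a_{S,x}(y)|\leq\|S\|$ (from unitarity of $\pinu(y)$ and $\normnu{1}=\normnu{\knu_{x\cdot 0}}=1$) reduces matters to bounding
\[
\int_O\haar{x}\int_G|\ipnu{f}{\pinu(y)1}|\,|\ipnu{\pinu(y)\knu_{x\cdot 0}}{g}|\haar{y}.
\]
For fixed $x$, Cauchy--Schwarz followed by the Schur orthogonality relation \ref{eq:Schur} bounds the inner $y$-integral by $\normnu{f}\normnu{g}/c_\nu$, so integrating over $O$ yields the finite total $\mu_G(O)\|S\|\normnu{f}\normnu{g}/c_\nu$. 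Fubini then gives
\[
\ipnu{S_O f}{g}=c_\nu^2\int_O \ipnu{(a_{S,x}\ast\Phi_x)f}{g}\haar{x},
\]
simultaneously producing the announced formula and the norm bound $\|S_O\|\leq c_\nu\mu_G(O)\|S\|$.

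Finally, to confirm the outer expression makes sense as an operator-valued integral, I would note that $a_{S,x}\in\bddfG$ with $\|a_{S,x}\|_\infty\leq\|S\|$ and $\Phi_x$ is trace-class with $\|\Phi_x\|_\tr=1$, so the second QHA--Young inequality from Section~\ref{sec:QHA} gives the uniform bound $\|a_{S,x}\ast\Phi_x\|\leq\|S\|/c_\nu$ on $O$, ensuring the integral converges in operator norm. The main subtlety is the phase-cancellation step: because $\pinu$ is only a projective representation, the cocycles $m_\nu(y,x)$ could in principle obstruct the decoupling, and the argument hinges on noticing that one appearance is complex-conjugated (from the antilinear slot of the inner product) while the other is not, so that $|m_\nu|=1$ forces the two factors to be mutual inverses and cancel exactly. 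After this observation, the rest is routine Fubini and QHA--Young bookkeeping.
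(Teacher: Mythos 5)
Your proposal is correct and follows essentially the same route as the paper: the same phase-cancellation computation rewriting the integrand as $a_{S,x}(y)\,\actopL{y}{\Phi_x}$, and the same Cauchy--Schwarz plus Schur-orthogonality estimate to justify absolute convergence and Fubini. Your explicit tracking of the cocycle factors and the resulting bound $\|S_O\|\leq c_\nu\mu_G(O)\|S\|$ (which correctly retains the $\|S\|$ that the paper's displayed estimate drops in passing) are welcome refinements, but the argument is the paper's.
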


\begin{proof}
    First, we show that $S_O$ exists.  For $f_1,f_2\in\Berg$, we have
    \begin{align*}
        \int_O\int_G  |\ipnu{S\pinu(y)1}{\pinu(yx)1}\ & \ipnu{(\pinu(yx)1\otimes \pinu(y)1)f_1}{f_2}| \haar{y}\haar{x}\\
         &\leq \|S\| \int_O\int_G |\ipnu{f_1}{\pinu(y)1}\ipnu{\pinu(yx)1}{f_2}| \haar{y}\haar{x}\\
         &=\|S\| \int_O\int_G |\ipnu{f_1}{\pinu(y)1}\ipnu{\pinu(y)\pinu(x)1}{f_2}| \haar{y}\haar{x}\\
         &\hspace{5cm} \text{(as $m(x,y)\in \T$)}\\
         &\leq \int_O \|\pi^\nu_{f_1,1}\|_2 \|\pi^\nu_{f_2,\pinu(x)1}\|_2 \haar{x}\quad \text{(by Holder's inequality)}\\
        &= \frac{1}{d_{\pinu}}\int_O \normnu{f_1}\normnu{1}\normnu{f_2}\normnu{\pinu(x)1}  \haar{x}\\
        &\hspace{5cm} \text{(by Schur-orthogonality relations \ref{eq:Schur})}\\
        &\leq\frac{1}{d_{\pinu}}\normnu{f_1}\normnu{f_2} \mu_G(O).
    \end{align*}
    Since $d_{\pinu}=c_\nu$, the operator $S_O$ exists with $\|S_O\|\leq c_\nu \mu_G(O)$.
    Also, note that
    \begin{align*}
        \ipnu{S\pinu(y)1}{\pinu(yx)1}\  &(\pinu(yx)1\otimes \pinu(y)1) \\
        &= \frac{1}{|m(y,x)|^2}\ipnu{S\pinu(y)1}{\pinu(y)\pinu(x)1}\  (\pinu(y)\pinu(x)1\otimes \pinu(y)1) \\
        &= \ipnu{S\pinu(y)1}{\pinu(y)\knu_{x\cdot 0}}\  (\pinu(y)\knu_{x\cdot 0}\otimes \pinu(y)1)\\
        &= a_{S,x}(y)\ \pinu(y)(\knu_{x\cdot 0}\otimes1)\pinu(y)^*\\
        &= a_{S,x}(y) \actopL{y}{\Phi_x}.
    \end{align*}
    Then it follows that 
    $$S_O=c_\nu^2\int_O a_{S,x} \ast \Phi_{x} \haar{x}.$$
\end{proof}

\begin{lemma}\label{lem:continuity_G}
    The map $x\mapsto a_{S,x} \ast \Phi_{x}$ from $G$ to $\bdd$ is continuous with respect to the operator norm.
\end{lemma}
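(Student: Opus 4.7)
The plan is to fix $x_0\in G$, let $x\to x_0$, and control $\|a_{S,x}\ast \Phi_x - a_{S,x_0}\ast \Phi_{x_0}\|$ via the standard telescoping decomposition
\begin{equation*}
a_{S,x}\ast \Phi_x - a_{S,x_0}\ast \Phi_{x_0} = (a_{S,x}-a_{S,x_0})\ast \Phi_x + a_{S,x_0}\ast (\Phi_x-\Phi_{x_0}).
\end{equation*}
Each summand will be handled by the QHA Young inequality for the $L^\infty$-trace-class pairing, namely $\|\psi\ast T\|\le \tfrac{1}{c_\nu}\|\psi\|_\infty\|T\|_{\tr}$, using the fact that every $\Phi_x=\knu_{x\cdot 0}\otimes 1$ is a rank-one operator with trace norm $\normnu{\knu_{x\cdot 0}}\,\normnu{1}=1$.

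First I would verify uniform boundedness: $|a_{S,x}(y)|\le \|S\|\,\normnu{\pinu(y)1}\,\normnu{\pinu(y)\knu_{x\cdot 0}}=\|S\|$, so $\|a_{S,x}\|_\infty\le \|S\|$ for every $x$. The two differences then admit the estimates
\begin{equation*}
|a_{S,x}(y)-a_{S,x_0}(y)| = \bigl|\ipnu{S\pinu(y)1}{\pinu(y)(\knu_{x\cdot 0}-\knu_{x_0\cdot 0})}\bigr| \le \|S\|\,\normnu{\knu_{x\cdot 0}-\knu_{x_0\cdot 0}},
\end{equation*}
uniformly in $y\in G$, and $\|\Phi_x-\Phi_{x_0}\|_{\tr}=\normnu{\knu_{x\cdot 0}-\knu_{x_0\cdot 0}}$ since the difference is again rank one. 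Plugging into Young's inequality yields $\|a_{S,x}\ast\Phi_x-a_{S,x_0}\ast\Phi_{x_0}\|\le \tfrac{2\|S\|}{c_\nu}\,\normnu{\knu_{x\cdot 0}-\knu_{x_0\cdot 0}}$.

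Thus the entire question reduces to the continuity of $x\mapsto \knu_{x\cdot 0}$ from $G$ into $\Berg$. This is immediate from the explicit formula $\knu_{x\cdot 0}(w)=h(x\cdot 0, x\cdot 0)^{(\nu+\gen)/2}/h(x\cdot 0,w)^{\nu+\gen}$ together with (i) continuity of the orbit map $x\mapsto x\cdot 0$, (ii) continuity and nonvanishing of the Jordan triple determinant $h$ on $\Omega\times\Omega$, and (iii) dominated convergence applied with a local Forelli--Rudin bound to pass from pointwise to $\Berg$-norm convergence. (Alternatively, one notes that $\pinu$ is strongly continuous, so $\pinu(x)1=\knu_{x\cdot 0}$ depends continuously on $x$ in the $\Berg$-norm.)

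There is no real obstacle: the substantive work was already packaged into the QHA Young inequality of Section \ref{sec:QHA} and the rank-one structure of $\Phi_x$. The only point requiring a brief justification is the $\Berg$-continuity of the normalized reproducing kernel along the orbit, and this is essentially classical.
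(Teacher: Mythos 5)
Your proposal is correct and follows essentially the same route as the paper: the same telescoping decomposition, the same QHA Young inequality $\|\psi\ast T\|\le \tfrac{1}{c_\nu}\|\psi\|_\infty\|T\|_{\tr}$ applied to each summand, the same uniform bounds $\|a_{S,x}\|_\infty\le\|S\|$ and $|a_{S,x}(y)-a_{S,x_0}(y)|\le\|S\|\,\normnu{\knu_{x\cdot 0}-\knu_{x_0\cdot 0}}$, and the same final reduction to the norm continuity of $x\mapsto\pinu(x)1=\knu_{x\cdot 0}$, which the paper (like your parenthetical alternative) settles by the strong continuity of the representation.
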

\begin{proof}
    Note that for $x_1,x_2\in G$,
    \begin{align*}
        \|a_{S,x_1} \ast \Phi_{x_1}-a_{S,x_2} \ast \Phi_{x_2}\|
        &\leq \|a_{S,x_1} \ast (\Phi_{x_1}-\Phi_{x_2})\|+\|(a_{S,x_1}-a_{S,x_2}) \ast \Phi_{x_2}\|\\
        &\leq \frac{1}{d_{\pinu}}\|a_{S,x_1}\|_\infty \|\Phi_{x_1}-\Phi_{x_2}\|_\tr+\frac{1}{d_{\pinu}}\|a_{S,x_1}-a_{S,x_2}\|_\infty\|\Phi_{x_2}\|_\tr
    \end{align*}
    by QHA Young's inequality.
    Since $\knu_{x\cdot 0}=\pinu(x)1$, we have that
    $$\|\Phi_{x_1}-\Phi_{x_2}\|_\tr=\|(\pinu(x_1)1-\pinu(x_2)1)\otimes 1\|_\tr\leq \normnu{\pinu(x_1)1-\pinu(x_2)1}.$$
    Also, for $y\in G$,
    \begin{align*}
        |a_{S,x_1}(y)-a_{S,x_2}(y)|&=|\ipnu{S\pinu(y)1}{\pinu(y)(\pinu(x_1)1-\pinu(x_2)1)}|\\
        &\leq \|S\|\normnu{\pinu(x_1)1-\pinu(x_2)1}\\
        &=\|S\|\normnu{\pinu(x_2^{-1}x_1)1-1}.
    \end{align*}
    Therefore,
    \begin{align*}
        \|a_{S,x_1} \ast \Phi_{x_1}-a_{S,x_2} \ast \Phi_{x_2}\|& \leq \frac{2}{d_{\pinu}}\|S\|\normnu{\pinu(x_2^{-1}x_1)1-1}.
    \end{align*}
    The proof is now complete by the strong continuity of the representation.
\end{proof}

We warn the reader of a slight abuse of notation in defining the function $a_{S,z}$ and the operator $\Phi_z$ for $z\in \Omega$ in the lemma below. Since  for $x\in G$, the function $a_{S,x}$ and the operator $\Phi_x$, depend only on $x\cdot 0$, we may label them as $a_{S,z}$ and $\Phi_z$ with $z\in \Omega$.

\begin{lemma}\label{lem:S_Br}
    Let $S\in \bdd$ and $\rho\geq 0$. Then $S_{\Br}$ exists as a bounded operator. Moreover, we can express $S_{\Br}$ as
    $$S_{\Br}=c_\nu^2 \int_{\Dr(0)} a_{S,z} \ast \Phi_z \inv{z}$$
    where $a_{S,z}\in \bddfG$ is given by
    $$a_{S,z}(y)=\ipnu{S\pinu(y)1}{\pinu(y)k_{z}^\nu},\ \ y\in G$$
    and $\Phi_z=k_{z}^\nu\otimes 1$.
\end{lemma}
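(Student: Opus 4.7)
The plan is to apply Lemma \ref{lem:finitemeasure} with $O = \Br$ and then push the resulting $G$-integral down to $\Omega = G/K$ using the right $K$-invariance of the integrand.

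By Lemma \ref{lem:balls}(3), the set $\Br$ is precompact with finite Haar measure, so Lemma \ref{lem:finitemeasure} applies and gives
$$S_{\Br} = c_\nu^2 \int_{\Br} a_{S,x} \ast \Phi_x \haar{x}.$$
Both $a_{S,x}(y) = \ipnu{S\pinu(y)1}{\pinu(y)\knu_{x\cdot 0}}$ and $\Phi_x = \knu_{x\cdot 0}\otimes 1$ depend on $x$ only through the point $x\cdot 0 \in \Omega$, and $\chi_{\Br}(x) = \chi_{\Dr(0)}(x\cdot 0)$. Consequently the operator-valued integrand descends unambiguously to a map $z \mapsto a_{S,z}\ast \Phi_z$ on $\Omega$, which is exactly the labelling used in the statement.

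To execute the change of variables, I test against arbitrary $\xi,\eta \in \Berg$. The scalar function $x \mapsto \chi_{\Br}(x)\,\ipnu{(a_{S,x}\ast \Phi_x)\xi}{\eta}$ is right $K$-invariant, bounded, and supported on a set of finite Haar measure, so the integration formula \eqref{eq:integral} applies and yields
$$\ipnu{S_{\Br}\xi}{\eta} = c_\nu^2 \int_{\Dr(0)} \ipnu{(a_{S,z}\ast \Phi_z)\xi}{\eta}\inv{z}.$$
This identifies $S_{\Br}$ with the claimed integral in the weak sense. Norm-continuity of $z \mapsto a_{S,z}\ast \Phi_z$ (a direct consequence of Lemma \ref{lem:continuity_G}) together with the precompactness of $\Dr(0)$ from Lemma \ref{lem:balls}(2) then upgrades the weak identity to a genuine Bochner integral in $\bdd$, completing the proof.

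The only technical subtlety I anticipate is that \eqref{eq:integral} is stated for scalar $L^1$ functions whereas the integrand here is operator-valued; testing against $\xi,\eta \in \Berg$ as above sidesteps this cleanly, since weak equality of two bounded operators suffices for equality in $\bdd$. No delicate estimates are required beyond the a priori bound $\|S_{\Br}\| \leq c_\nu \mu_G(\Br)\|S\|$ already furnished by Lemma \ref{lem:finitemeasure}.
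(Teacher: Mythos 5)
Your proposal is correct and follows essentially the same route as the paper: invoke Lemma \ref{lem:finitemeasure} for the precompact set $\Br$ and then descend from $G$ to $\Omega=G/K$ via the integration formula \eqref{eq:integral}, using that the integrand depends on $x$ only through $x\cdot 0$. The paper states this transition more tersely; your extra care in testing against $\xi,\eta\in\Berg$ to justify applying the scalar formula to a weak operator-valued integral is a harmless elaboration of the same argument.
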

\begin{proof}
    We have $\mu_G(\Br)<\infty$ as $\Br$ is a precompact subset of $G$. Hence by Lemma \ref{lem:finitemeasure}, $S_{\Br}$ exists.
    Since $\pinu(x)1=k_{x\cdot 0}$, we have that
    \begin{align*}
        S_{\Br}&=c_\nu^2\int_{\Br} a_{S,x\cdot 0} \ast \Phi_{x\cdot 0} \haar{x}\\
        &=c_\nu^2 \int_{\Dr(0)} a_{S,z} \ast \Phi_z \inv{z}
    \end{align*}
    by \ref{eq:integral}.
\end{proof}

\begin{lemma}\label{lem:continuity_D}
    The map $z\mapsto a_{S,z} \ast \Phi_{z}$ from $\Omega$ to $\bdd$ is continuous with respect to the operator norm.
\end{lemma}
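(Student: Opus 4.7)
The plan is to reduce this directly to Lemma \ref{lem:continuity_G}, which gives the analogous continuity statement for the map $x \mapsto a_{S,x} \ast \Phi_x$ defined on the group $G$. By construction, the function $a_{S,x}$ and the operator $\Phi_x$ depend on $x \in G$ only through the point $x \cdot 0 \in \Omega$; this is exactly the abuse of notation remarked upon in the paragraph preceding Lemma \ref{lem:S_Br}. Consequently, $x \mapsto a_{S,x} \ast \Phi_x$ is constant on the left cosets $xK$ and factors through the quotient $G \to G/K \cong \Omega$.

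To propagate continuity through this factorization, I would invoke the existence of continuous local sections of the quotient map $G \to G/K$. Since $K$ is a compact (closed) subgroup of the Lie group $G$, the quotient $G \to G/K$ is a locally trivial principal $K$-bundle, so for every $z_0 \in \Omega$ there exists an open neighborhood $U \ni z_0$ and a continuous map $s : U \to G$ with $s(z) \cdot 0 = z$ for all $z \in U$. An equally convenient and entirely explicit choice is $s(z) = \tau_z$, since $\tau_z \cdot 0 = z$ and the dependence $z \mapsto \tau_z$ is continuous.

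Having fixed such a section $s$ on a neighborhood $U$ of $z_0$, I would write the map under consideration on $U$ as the composition
\[
z \; \longmapsto \; s(z) \; \longmapsto \; a_{S,\,s(z)} \ast \Phi_{s(z)} \; = \; a_{S,z} \ast \Phi_z,
\]
where the first arrow is continuous by the choice of $s$ and the second arrow is continuous with respect to the operator norm by Lemma \ref{lem:continuity_G}. This yields continuity at the arbitrary point $z_0 \in \Omega$, hence continuity on all of $\Omega$.

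There is no real obstacle here; the only point requiring a small remark is the existence of continuous local sections of $G \to G/K$, which is standard for homogeneous spaces of Lie groups and can be replaced by the explicit section $z \mapsto \tau_z$ if one wants to avoid invoking bundle theory.
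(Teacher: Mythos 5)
Your proposal is correct and takes essentially the same route as the paper: both reduce the statement to Lemma \ref{lem:continuity_G} via the identification $\Omega \cong G/K$, the only difference being that the paper \emph{descends} continuity through the quotient map $i : G \to G/K$ (a map on $G/K$ is continuous iff its composition with $i$ is), whereas you \emph{lift} via a continuous section such as $z \mapsto \tau_z$. Both mechanisms are standard and valid here, so this is a cosmetic rather than substantive difference.
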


\begin{proof}
    We recall that $\Omega$ is the homogeneous space $G/K$. Let $i:G\to \Omega$ be the projection map. 
    Define $F:\Omega\to \bdd$ by 
    $$F(z)=a_{S,z} \ast \Phi_{z}.$$
    Then $F$ is continuous if and only if $F\circ i:G\to \bdd$ is continuos. Note that
    $$(F\circ i)(x)=a_{S,x\cdot 0} \ast \Phi_{x\cdot 0}=a_{S,x} \ast \Phi_x,\quad x\in G.$$
    We have already verified the continuity of $F\circ i$ in Lemma \ref{lem:continuity_G}. This completes the proof.
\end{proof}

\begin{proposition}\label{prop:in_Toepalg}
    Let $S\in \bdd$ and let $\rho\in(0,\infty)$. Then   $S_{\Br}\in \overline{\toep}$. 
\end{proposition}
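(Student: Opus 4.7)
The plan is direct: combine the integral representation
\[
   S_{\Br}=c_\nu^2 \int_{\Dr(0)} a_{S,z} \ast \Phi_z \inv{z}
\]
from Lemma \ref{lem:S_Br} with Lemma \ref{lem:convol_traceclass} (which already tells us that convolutions in $\bddfG \ast \traceop$ lie in $\overline{\toep}$) via a standard Riemann-sum / Bochner-integral approximation. The idea is that the integrand lies pointwise in the closed set $\overline{\toep}$, so the integral does as well.

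First I would verify that the integrand $F(z) := a_{S,z} \ast \Phi_z$ lies in $\overline{\toep}$ for every fixed $z \in \Omega$. Since $\Phi_z = k_z^\nu \otimes 1$ is a rank-one operator with trace-class norm $\|\Phi_z\|_{\tr} = \normnu{k_z^\nu}\normnu{1} = 1$, and since Cauchy-Schwarz together with the unitarity of $\pinu$ yields
\[
   |a_{S,z}(y)| = |\ipnu{S\pinu(y)1}{\pinu(y)k_z^\nu}| \leq \|S\|,
\]
we have $a_{S,z} \in \bddfG$ with $\|a_{S,z}\|_\infty \leq \|S\|$ and $\Phi_z \in \traceop$. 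Lemma \ref{lem:convol_traceclass} then immediately gives $F(z) \in \overline{\toep}$.

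Next I would approximate $S_{\Br}$ by Riemann-type sums of elements of $\overline{\toep}$. By Lemma \ref{lem:continuity_D}, $F$ is operator-norm continuous on $\Omega$, and by Lemma \ref{lem:balls}(2) the closure $\overline{\Dr(0)}$ is compact, so $F(\overline{\Dr(0)})$ is a compact subset of $\overline{\toep}$ and $\lambda(\Dr(0))<\infty$. Given $\epsilon>0$, cover $F(\overline{\Dr(0)})$ by finitely many open $\epsilon$-balls in $\bdd$ centered at points $T_1, \dots, T_N \in F(\overline{\Dr(0)}) \subset \overline{\toep}$, and pull back and disjointify to obtain a Borel partition $\{E_i\}_{i=1}^N$ of $\overline{\Dr(0)}$ with $\|F(z) - T_i\| < \epsilon$ on $E_i$. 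Then
\[
   \Bigl\| S_{\Br} - c_\nu^2 \sum_{i=1}^N \lambda(E_i)\, T_i \Bigr\| \leq c_\nu^2 \,\epsilon\, \lambda(\Dr(0)),
\]
and $c_\nu^2 \sum_i \lambda(E_i) T_i \in \overline{\toep}$ since $\overline{\toep}$ is a linear subspace containing each $T_i$. Letting $\epsilon\to 0$ and using that $\overline{\toep}$ is norm-closed gives $S_{\Br}\in\overline{\toep}$.

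The main obstacle is essentially none: the genuinely substantive content has already been packaged in Lemmas \ref{lem:convol_traceclass}, \ref{lem:S_Br}, and \ref{lem:continuity_D}. The only point requiring mild care is the passage from the weak-integral definition of $S_{\Br}$ to a norm-level Riemann approximation; this is justified because $F$ is operator-norm continuous and bounded on the finite-measure set $\Dr(0)$, so its weak and Bochner integrals coincide, and Bochner integrals of continuous functions valued in a closed subspace remain in that subspace.
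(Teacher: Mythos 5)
Your proof is correct and follows essentially the same route as the paper: both reduce to Lemma \ref{lem:convol_traceclass} for the pointwise membership $a_{S,z}\ast\Phi_z\in\overline{\toep}$, invoke the integral representation of Lemma \ref{lem:S_Br}, and use the norm continuity from Lemma \ref{lem:continuity_D} together with the precompactness of $\Dr(0)$ to realize $S_{\Br}$ as a norm limit of Riemann sums lying in the closed subspace $\overline{\toep}$. Your version merely spells out the $\epsilon$-ball covering and the finiteness of $\lambda(\Dr(0))$ that the paper leaves implicit.
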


\begin{proof}
    By Lemma \ref{lem:convol_traceclass}, we have that for each $z\in \Omega$,
    $$a_{S,z}\ast \Phi_z\in \overline{\toep}.$$
    Now by Lemma \ref{lem:S_Br},
    \begin{align*}
        S_{\Br}=c_\nu^2 \int_{\Dr(0)} a_{S,z} \ast \Phi_z \inv{z}= c_\nu^2 \int_{\Dr(0)} \frac{1}{h(z,z)^p}(a_{S,z} \ast \Phi_z) \ dz.
    \end{align*}
    Furthermore, $\Dr(0)$ is a precompact subset of $\Omega$ and the map $z\mapsto \frac{1}{h(z,z)^p}(a_{S,z} \ast \Phi_z)$ from $\Omega$ to $\bdd$ is continuous with respect to the operator norm by Lemma \ref{lem:continuity_D} and because $h(z,z)$ is a polynomial in $z$ and $\Bar{z}$ vanishing nowhere in $\Omega$. Therefore, $S_{\Br}$ is merely a limit of the Riemann sums of operators of the form $\frac{1}{h(z,z)^\gen}(a_{S,z} \ast \Phi_z)$. This proves that 
    $$S_{\Br}\in \overline{\toep},$$
    concluding the proof.
\end{proof}

\subsection{An integral representation of weakly localized operators}
Here, we prove that a weakly localized operator $S\in \wloc$ has the integral representation $S=S_G$ and $\lim_{\rho\to \infty}S_{\Omega\setminus\Br}=0$ in operator norm.

\begin{proposition}\label{prop:S_G}
    Let $\nu>-1+(r-1)a/2$ and let $\alpha\in (\frac{\gen-\nu-2}{2},\frac{\nu+b+2}{2})$. Let $S\in\wloc$. Then
    $\lim_{\rho\to \infty} S_{\Br}=S$
    in operator norm.
\end{proposition}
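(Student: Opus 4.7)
The plan is to realize $S - S_\Br$ as a weak double integral with the same integrand as in Lemma \ref{lem:integral_S} (only with the inner integral restricted to $G\setminus \Br(y)$), and then bound its operator norm via a Schur-test tailored to the weight $h(\cdot,\cdot)^{\alpha}$ that appears in Definition \ref{def:weakly}. Set $K(y,x) := \ipnu{S\pinu(y)1}{\pinu(x)1}$. Lemma \ref{lem:integral_S}, after the left-invariant substitution $x\mapsto yx$ in the inner integral, gives
$$\ipnu{Sf_1}{f_2} = c_\nu^2 \int_G\int_G K(y,x)\,\ipnu{f_1}{\pinu(y)1}\,\ipnu{\pinu(x)1}{f_2}\haar{x}\haar{y}.$$
The corresponding formula for $S_\Br$ (with inner integral over $\Br(y)$, since $y^{-1}x\in\Br$ iff $x\in\Br(y)$) follows by unravelling \eqref{eq:SO}, which is justified by Fubini as $\mu_G(\Br)<\infty$ (Lemma \ref{lem:balls}). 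Subtracting,
$$\ipnu{(S-S_\Br)f_1}{f_2} = c_\nu^2 \int_G\int_{G\setminus\Br(y)} K(y,x)\,\ipnu{f_1}{\pinu(y)1}\,\ipnu{\pinu(x)1}{f_2}\haar{x}\haar{y}.$$

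Next, I would split the product $\ipnu{f_1}{\pinu(y)1}\,\ipnu{\pinu(x)1}{f_2}$ symmetrically with the weight $\sqrt{h(x,x)^{\alpha}/h(y,y)^{\alpha}}$ and apply Cauchy-Schwarz to the measure $|K(y,x)|\haar{x}\haar{y}$. This gives $|\ipnu{(S-S_\Br)f_1}{f_2}|^2 \leq c_\nu^4 \cdot A \cdot B$, where in $A$ the factor $|\ipnu{f_1}{\pinu(y)1}|^2$ is paired with $h(x,x)^{\alpha}/h(y,y)^{\alpha}$ and in $B$ the factor $|\ipnu{\pinu(x)1}{f_2}|^2$ is paired with the reciprocal weight. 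Integrating $x$ first in $A$ and $y$ first in $B$ (using the symmetry $x\notin\Br(y)\Leftrightarrow y\notin\Br(x)$, together with $|K(y,x)| = |\ipnu{S^*\pinu(x)1}{\pinu(y)1}|$), the inner integrals are \emph{precisely} the expressions bounded by $\Ione{S}(\rho)$ and $\Ione{S^*}(\rho)$ in Definition \ref{def:weakly}. The remaining outer integrals $\int_G|\ipnu{f_i}{\pinu(z)1}|^2\haar{z}$ equal $\normnu{f_i}^2/c_\nu$ by the Schur orthogonality relation \eqref{eq:Schur}, using $d_{\pinu}=c_\nu$ and $\normnu{1}=1$. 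Combining these bounds gives
$$\|S - S_\Br\| \leq c_\nu\sqrt{\Ione{S}(\rho)\,\Ione{S^*}(\rho)} \xrightarrow[\rho\to\infty]{} 0,$$
by the defining properties of $\wloc$.

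The main subtlety I anticipate is that the formal double integral representing $S$ is not absolutely convergent as a Bochner integral into $\bdd$, so all estimates must be carried out at the level of sesquilinear forms on $\Berg\times\Berg$, and the Fubini interchange must be justified only after the weight $h(\cdot,\cdot)^{\alpha}$ is inserted -- this is precisely what forces the hypothesis $S\in\wloc$ into the argument. Once this bookkeeping is in order, the rest is a standard Schur-test calculation in which the two integrability conditions $\Ione{S}(\rho)\to 0$ and $\Ione{S^*}(\rho)\to 0$ supply the vanishing constant.
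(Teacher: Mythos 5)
Your proof is correct and follows essentially the same route as the paper: both express $S-S_{\Br}$ as the weak integral over the region $\{(x,y)\,:\,x\notin\Br(y)\}$ and control its sesquilinear form by a weighted Cauchy--Schwarz (Schur-test) argument with the weight $h(\cdot,\cdot)^{\alpha}$, finishing with the Schur orthogonality relations. The only difference is bookkeeping: your symmetric splitting yields the bound $c_\nu\sqrt{\I{S}(\rho)\,\I{S^*}(\rho)}$, whereas the paper's two-step asymmetric Cauchy--Schwarz gives $c_\nu^2\,\I{S}(\rho)^{1/2}\I{S^*}(0)^{1/2}$; both vanish as $\rho\to\infty$ for $S\in\wloc$.
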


\begin{proof}
    Let $f_1,f_2\in \Berg$. 
    Define the integral $I$ by 
    $$I= \int_{G\setminus \Br}\int_{G} |\ipnu{S\pinu(y)1}{\pinu(yx)1}  \ \ipnu{(\pinu(yx)1\otimes \pinu(y)1)f_1}{f_2}| \ \haar{y} \haar{x}.$$
    Now by Tonelli's theorem followed by a change of variable, we get
    \begin{align*}
        I&= \int_{G}\int_{G\setminus\Br(y)} |\ipnu{S\pinu(y)1}{\pinu(x)1}  \ \ipnu{(\pinu(x)1\otimes\pinu(y)1)f_1}{f_2}| \ \haar{x} \haar{y}\\
        &=\int_{G}\int_{G\setminus\Br(y)} |\ipnu{S\pinu(y)1}{\pinu(x)1}  \ipnu{f_1}{\pinu(y)1}\ipnu{\pinu(x)1}{f_2}| \ \haar{x} \haar{y}\\
        &=\int_{G}|\ipnu{f_1}{\pinu(y)1}|\int_{G\setminus\Br(y)} |\ipnu{S\pinu(y)1}{\pinu(x)1} \ipnu{\pinu(x)1}{f_2}| \ \haar{x} \haar{y}\\
        &\leq \|f_1\|\Bigg( \int_{G} \bigg(\int_{G\setminus \Br(y)} |\ipnu{S\pinu(y)1}{\pinu(x)1} \ipnu{\pinu(x)1}{f_2}| \haar{x}  \bigg)^2 \haar{y}\Bigg)^\frac{1}{2}\\
        &\hspace{4cm}\text{(by Holder's inequality)}\\
        &= \|f_1\| \Bigg( \int_{G} F(y)^2\haar{y}\Bigg)^\frac{1}{2}
    \end{align*}
    where
    $$F(y)=\int_{G\setminus \Br(y)} |\ipnu{S\pinu(y)1}{\pinu(x)1} \ipnu{\pinu(x)1}{f_2}| \haar{x}.$$
    Now note that
    \begin{align*}
        F(y)&=\int_{G\setminus \Br(y)}|\ipnu{S\pinu(y)1}{\pinu(x)1}|^\frac{1}{2} \frac{h(x,x)^\frac{\alpha}{2}}{h(y,y)^\frac{\alpha}{2}}\\
        &\hspace{4cm} \times |\ipnu{S\pinu(y)1}{\pinu(x)1}|^\frac{1}{2}| \ipnu{\pinu(x)1}{f_2}| \frac{h(y,y)^\frac{\alpha}{2}}{h(x,x)^\frac{\alpha}{2}}\haar{x}\\
        &\leq \Big(\int_{G\setminus \Br(y)} |\ipnu{S\pinu(y)1}{\pinu(x)1}| \frac{h(x,x)^\alpha}{h(y,y)^\alpha} \haar{x} \Big)^\frac{1}{2} \\
        &\hspace{4cm} \times \Big( \int_G |\ipnu{S\pinu(y)1}{\pinu(x)1}|| \ipnu{\pinu(x)1}{f_2}|^2 \frac{h(y,y)^\alpha}{h(x,x)^\alpha} \haar{x} \Big)^\frac{1}{2}\\
        &\leq \Ione{S}(\rho)^\frac{1}{2} \Big( \int_G |\ipnu{S\pinu(y)1}{\pinu(x)1}| |\ipnu{\pinu(x)1}{f_2}|^2 \frac{h(y,y)^{\alpha}}{h(x,x)^{\alpha}} \haar{x} \Big)^\frac{1}{2}.
    \end{align*}

    Therefore, we have
    \begin{align*}
        I&\leq  \Ione{S}(\rho)^\frac{1}{2}\|f_1\| \Bigg( \int_{G} \int_G |\ipnu{S\pinu(y)1}{\pinu(x)1}| |\ipnu{\pinu(x)1}{f_2}|^2 \frac{h(y,y)^{\alpha}}{h(x,x)^{\alpha}} \haar{x} \haar{y}\Bigg)^\frac{1}{2}\\
         &=  \Ione{S}(\rho)^\frac{1}{2}\|f_1\|  \Bigg( \int_{G} |\ipnu{\pinu(x)1}{f_2}|^2 \int_G |\ipnu{S^*\pinu(x)1}{\pinu(y)1}|  \frac{h(y,y)^{\alpha}}{h(x,x)^{\alpha}} \haar{y} \haar{x} \Bigg)^\frac{1}{2}\\
         &\leq \Ione{S}(\rho)^\frac{1}{2} \Ione{S^*}(0)^\frac{1}{2}\|f_1\| \Bigg( \int_{G} |\ipnu{\pinu(x)1}{f_2}|^2  \haar{x} \Bigg)^\frac{1}{2}\\
        &= \Ione{S}(\rho)^\frac{1}{2} \Ione{S^*}(0)^\frac{1}{2}\|f_1\| \|f_2\| \quad \text{(by \ref{eq:integral_nu} and the reproducing formula)}. 
    \end{align*}

    Then $S_{G\setminus \Br}$ exists for any $\rho\geq 0$ with $\|S_{G\setminus \Br}\|\leq c_\nu^2\Ione{S}(\rho)^\frac{1}{2} \Ione{S^*}(0)^\frac{1}{2}$. Since $S\in \wloc$, we have
    $$\lim_{\rho\to \infty}S_{G\setminus \Br}=0$$
    in operator norm.
    In particular $S_G\in \bdd$ and
    \begin{align*}
        S_G&=\int_{G}\int_{G} \ipnu{S\pinu(y)1}{\pinu(yx)1}  \ipnu{(\pinu(yx)1\otimes \pinu(y)1)f_1}{f_2} \ \haar{y} \haar{x}\\
        &=\int_{G}\int_{G} \ipnu{S\pinu(y)1}{\pinu(x)1}  \ \ipnu{(\pinu(x)1\otimes \pinu(y)1)f_1}{f_2} \ \haar{x} \haar{y}\\
        &\hspace{4cm}\text{(by Fubini's theorem and a change of variable)}\\
        &= S \quad \text{(by Lemma \ref{lem:integral_S})}.
    \end{align*}
    Hence $S=S_G=S_{\Br}+S_{G\setminus \Br}$.
    Now we can conclude that the following convergence holds in the operator norm:
    $$\lim_{\rho\to \infty}S_{\Br}=S.$$
    \end{proof}

    Now it is evident that the following inclusion holds by propositions \ref{prop:in_Toepalg} and \ref{prop:S_G}.

    \begin{proposition}
        Let $\nu>-1+(r-1)a/2$ and let $\alpha\in (\frac{\gen-\nu-2}{2},\frac{\nu+b+2}{2})$. Then the following inclusion holds:
        $$\wloc\subset \overline{\toep}.$$
    \end{proposition}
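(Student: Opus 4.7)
The plan is to combine the two immediately preceding propositions in a single short argument. Given $S \in \wloc$, Proposition \ref{prop:S_G} tells us that the net $\{S_{\Br}\}_{\rho > 0}$ converges to $S$ in the operator norm as $\rho \to \infty$. On the other hand, Proposition \ref{prop:in_Toepalg} says that for every $\rho \in (0,\infty)$ the operator $S_{\Br}$ belongs to $\overline{\toep}$. Since $\overline{\toep}$ is by definition a norm-closed subset of $\bdd$, the norm limit $S$ of the sequence $S_{\Br_n}$ (for any $\rho_n \to \infty$) must also lie in $\overline{\toep}$. That is all the argument really needs.

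So the only step is to articulate this in three lines: pick $S \in \wloc$, take $\rho_n \to \infty$ and note $S_{\Brr{\rho_n}} \in \overline{\toep}$ by Proposition \ref{prop:in_Toepalg}, and then invoke Proposition \ref{prop:S_G} to get $\|S_{\Brr{\rho_n}} - S\| \to 0$, so that $S$ lies in the norm closure of $\overline{\toep}$, which equals $\overline{\toep}$ itself. The hypotheses on $\nu$ and $\alpha$ are exactly those required by Proposition \ref{prop:S_G} so nothing extra needs to be checked. There is no real obstacle here; the content is entirely in the two preceding propositions, and this proposition is essentially their corollary packaging the inclusion we need before stating the main density theorem.
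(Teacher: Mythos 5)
Your argument is exactly the paper's: the proposition is stated there as an immediate consequence of Propositions \ref{prop:in_Toepalg} and \ref{prop:S_G}, combining $S_{\Br}\in\overline{\toep}$ with the norm convergence $S_{\Br}\to S$ and the closedness of $\overline{\toep}$. The proposal is correct and complete.
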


    Now we have the required description of the Toeplitz algebra.
    \begin{theorem}\label{theo:wloc_Toeplitz}
        Let $\nu>-1+(r-1)a/2$ and let $\alpha\in (\frac{\gen-\nu-2}{2},\frac{\nu+b+2}{2})$. We have the following
        $$\toepalg=\overline{\wloc}=\overline{\toep}=\overline{\bddfG\ast \traceop}.$$
    \end{theorem}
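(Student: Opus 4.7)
The plan is to assemble the four-way equality from ingredients already in hand, using only two substantive inputs: the inclusion $\wloc\subset\overline{\toep}$ (the proposition immediately preceding the theorem) and the inclusion $\bddfG\ast\traceop\subset\overline{\toep}$ (Lemma \ref{lem:convol_traceclass}). Everything else is a formal bookkeeping argument about the $C^*$-algebra generated by a subset.

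To establish $\toepalg=\overline{\wloc}=\overline{\toep}$, I chain three inclusions. Proposition \ref{prop:Toeplitz_wloc} gives $\toep\subset\wloc$, and the preceding proposition gives $\wloc\subset\overline{\toep}$; taking norm closures therefore yields $\overline{\toep}\subset\overline{\wloc}\subset\overline{\toep}$, so $\overline{\wloc}=\overline{\toep}$. Since $\wloc$ is a self-adjoint subalgebra of $\bdd$ containing $\toep$, its norm closure $\overline{\wloc}$ is a $C^*$-subalgebra containing $\toep$, and the minimality of $\toepalg$ among such algebras forces $\toepalg\subset\overline{\wloc}$. Conversely, $\overline{\wloc}=\overline{\toep}\subset\toepalg$ because $\toepalg$ is norm closed and contains $\toep$, so all three sets agree.

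For the equality $\overline{\toep}=\overline{\bddfG\ast\traceop}$, Lemma \ref{lem:convol_traceclass} immediately yields $\overline{\bddfG\ast\traceop}\subset\overline{\toep}$. For the reverse containment, Lemma \ref{lem:Toeplitz} writes each $T_a$ with $a\in\bddf$ as $T_a=a\ast(c_\nu\Phi)$, where $\Phi=1\otimes 1$ is rank-one (hence trace-class) and $a$ is identified with the bounded right-$K$-invariant function $x\mapsto a(x\cdot 0)$ on $G$; thus $\toep\subset\bddfG\ast\traceop$, and taking closures completes the chain.

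I do not anticipate any real obstacle: the genuine work is already sealed in Proposition \ref{prop:in_Toepalg} (approximating $S_{\Br}$ by Toeplitz operators via Riemann sums over the precompact region $\Dr(0)$), Proposition \ref{prop:S_G} (the norm-convergent integral representation $S=\lim_\rho S_{\Br}$ for weakly localized $S$), and Lemma \ref{lem:convol_traceclass} (density of integrable-symbol Toeplitz operators in $\traceop$ combined with the QHA Young inequality). The theorem itself is a clean packaging of these consequences.
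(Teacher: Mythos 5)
Your proposal is correct and follows essentially the same route as the paper: both rest on the chain $\toep\subset\wloc\subset\overline{\toep}$ together with $\toep\subset\bddfG\ast\traceop\subset\toepalg$ (via Lemmas \ref{lem:Toeplitz} and \ref{lem:convol_traceclass}), the fact that $\overline{\wloc}$ is a $C^*$-algebra, and the minimality of $\toepalg$. You have merely spelled out the closure-taking bookkeeping in more detail than the paper does.
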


    \begin{proof}
        We have proved the inclusions:
        $$\toep\subset \wloc\subset \overline{\toep}$$
        and from lemmas \ref{lem:Toeplitz} and \ref{lem:convol_traceclass}, we have
        $$\toep\subset \bddfG\ast \traceop\subset \toepalg.$$
        Furthermore $\overline{\wloc}$ is a $C^*$-algebra. Then by the minimality of the Toeplitz algebra $\toepalg$ and by the above inclusions, we get the required equalities.
    \end{proof}

    \subsection{Discussion}
    While we now have that Toeplitz operators are norm dense in the Toeplitz algebra of certain weighted Bergman spaces $\Berg$ of any bounded symmetric domain, we still do not have a complete characterization of the Toeplitz algebra that does not involve taking the closure. As for the Fock space, using QHA, Fulsche characterized the Toeplitz algebra as the "algebra of all bounded uniformly continuous operators." This characterization makes analysis of the Toeplitz algebra on the Fock space much more agreeable. Now we pose the following questions for bounded symmetric domains.
    \begin{enumerate}
        \item Can we characterize the Toeplitz algebra $\toepalg$ as an algebra of "uniformly continuous operators" with some notion of uniform continuity?
        \item In \cite{H17} and \cite{H19}, Hagger discusses band-dominated operators and proves that Toeplitz operators are band-dominated. Does the algebra of band-dominated operators coincide with the Toeplitz algebra?
        \item Is the algebra of $\alpha$-weakly-localized operators closed?
        \item We require the extra restriction $\nu>-1+(r-1)a/2$. Is it possible to generalize these results to $\nu>-1$?
        \item Lastly, does the norm density of Toeplitz operators in the Toeplitz algebra hold for any complex domain $\Omega\subset\C^n$? 
    \end{enumerate}

    \section{$H$- invariant Toeplitz algebras}\label{sec:HToeplitz}
    Toeplitz algebras generated by Toeplitz operators that are invariant under group actions, is a widely explored topic in operator theory. Particularly because such invariances may lead to commutative Toeplitz algebras. We refer to \cite{BV12, BV13, GQV06, Q16, QV07,QV08} for discussions on commutative Toeplitz algebras, specializing to the Bergman space over the unit ball $\cA^2(\B^n)$. In \cite{DOQ15}, the authors discuss commutative Toeplitz algebras over $\Berg$ of a bounded symmetric domain $\Omega$. 
    
    The density of Toeplitz operators in the radial Toeplitz algebra over $\cA^2(\B^n)$ was proved in \cite{S08}, using the notion of Laplacian of an operator. This result was then used in \cite{GMV13}, to discuss the Gelfand theory of the radial Toepliz algebra over $\cA^2(\B^n)$. In \cite{DM23}, it was observed that Toeplitz operators are dense in the separately radial Toeplitz algebra over $\cA^2(\B^n)$. This was achieved by using the average over the representation, along with the density of Toeplitz operators in the full Toeplitz algebra. This method can be used whenever the group in consideration is compact.
    
     In this section, we note that compactness of the group is not necessary. We establish an $H$-invariant version of Theorem \ref{theo:wloc_Toeplitz}, for $H<G$. Thus the density of Toeplitz operators in the Toeplitz algebra, carry over to $H$-invariant Toeplitz algebras over $\Berg$ as well.
    We will always assume $H<G$ is a closed subgroup. 

    \subsection{$H$-invariant functions and operators}
    A function $f\in \bddfG$ is said to be $H$-invariant if 
    $$\actfL{h}{f}=f,\quad h\in H.$$
     An operator $S\in \bdd$ is said to be $H$-invariant if 
     $$\actopL{h}{S}=S,\quad h\in H.$$
     We denote the set of all $H$-invariant elements in $\bddfG$ by $\bddfG^H$, and the set of all $H$-invariant bounded operators is denoted by $\bddH$. Then $\bddfG^H$ and $\bddH$ form closed subalgebras of $\bddfG$ and $\bdd$ respectively. 

     \begin{lemma}\label{lem:translation}
         Let $a\in \bddfG$, $\psi\in \Lone$, and $S\in \traceop$. Then for $x\in G$,
         \begin{enumerate}
             \item $\actfL{x}{a\ast \psi}=\actfL{x}{a}\ast \psi$
             \item $\actopL{x}{a\ast S}=\actfL{x}{a}\ast S$.
         \end{enumerate}
         Moreover, if $a\in \bddfG^H$, then $a\ast \psi\in \bddfH$ and $a\ast S\in \bddH$.
     \end{lemma}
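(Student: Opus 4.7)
The plan is to verify the two identities directly from the definitions of the convolutions and the translations, relying on the cocycle identity $\pinu(x)\pinu(y)=m_\nu(x,y)\pinu(xy)$ together with the fact that $|m_\nu(x,y)|=1$. The $H$-invariance claim at the end will then be immediate by specializing $x=h\in H$.

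For part (1), I would compute both sides pointwise in $z\in \Omega$. Starting from the left-hand side,
\begin{align*}
(\actfL{x}{(a\ast \psi)})(z)
&=(a\ast \psi)(x^{-1}\cdot z)
=\int_G a(y)\,\psi(y^{-1}x^{-1}\cdot z)\haar{y},
\end{align*}
and on the right-hand side,
\begin{align*}
(\actfL{x}{a}\ast \psi)(z)
=\int_G a(x^{-1}y)\,\psi(y^{-1}\cdot z)\haar{y}.
\end{align*}
The left-invariance of $\mu_G$ under the substitution $y\mapsto xy$ converts the second expression into the first, giving (1). The main routine point here is verifying the absolute convergence needed to apply the change of variables, which follows from $a\in \bddfG$ and $\psi\in \Lone$ via the same Young-type estimate already used to make sense of $a\ast\psi$.

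For part (2), I would first expand $\actopL{x}{a\ast S}$ as a weak integral and push the conjugation by $\pinu(x)$ inside. Using the cocycle identity twice (for $\pinu(x)\pinu(y)$ and its adjoint) and the fact that $|m_\nu(x,y)|^2=1$, the inner expression simplifies:
\begin{align*}
\pinu(x)\actopL{y}{S}\pinu(x)^*
=\pinu(x)\pinu(y)\,S\,\pinu(y)^*\pinu(x)^*
=\pinu(xy)\,S\,\pinu(xy)^*
=\actopL{xy}{S}.
\end{align*}
Thus $\actopL{x}{a\ast S}=\int_G a(y)\actopL{xy}{S}\haar{y}$, and the left-invariant change of variable $y\mapsto x^{-1}y$ yields
\begin{align*}
\actopL{x}{a\ast S}
=\int_G a(x^{-1}y)\actopL{y}{S}\haar{y}
=\actfL{x}{a}\ast S.
\end{align*}
Here one should justify the exchange of $\pinu(x)(\cdot)\pinu(x)^*$ with the weak integral; this is standard, since both $\actopL{x}{(\cdot)}$ is a bounded operator on $\bdd$ and the integrand is absolutely Bochner/weakly integrable by the QHA Young inequality for $a\in \bddfG$ and $S\in \traceop$.

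Finally, the in-text moreover follows by specializing $x=h\in H$ in (1) and (2): if $\actfL{h}{a}=a$ for every $h\in H$, then $\actfL{h}{(a\ast\psi)}=a\ast\psi$ and $\actopL{h}{a\ast S}=a\ast S$, giving $H$-invariance of $a\ast\psi$ and $a\ast S$. I expect no real obstacle in this lemma; the only subtlety is being careful about the weak integrals, which is already covered by the QHA Young inequalities proved earlier in Section~\ref{sec:QHA}.
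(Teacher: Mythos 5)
Your proposal is correct and follows essentially the same route as the paper: push the translation inside the weak integral, use the homomorphism property $L^\nu_{xy}=L^\nu_x\circ L^\nu_y$ (which you re-derive from the cocycle identity rather than citing), change variables by left-invariance of the Haar measure, and obtain the $H$-invariance claim by specializing $x=h$. The paper only writes out part (2) and declares part (1) similar, so your explicit verification of (1) is a harmless elaboration.
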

     \begin{proof}
         As the proofs of (1) and (2) are similar, we only prove statement (2). Note that for $x\in G$,
         \begin{align*}
             \actopL{x}{a\ast S}&= \int_{G} a(y) \actopL{x}{\actopL{y}{S}} \haar{y}\\
             &=\int_{G} a(y)  \actopL{xy}{S} \haar{y} \quad \text{(by \ref{eq:homomorph})}\\
             &=\int_G a(x^{-1}y) \actopL{y}{S} \haar{y}\quad \text{(by a change of variable)}\\
             &=\actfL{x}{a}\ast S.
         \end{align*}
         Now the second claim is immediate.
     \end{proof}
     Denote the set of all $H$-invariant elements in $\bddf$ by $\bddfH$. The following proposition characterizes $H$-invariant Toeplitz operators.
     \begin{proposition}\label{prop:Toeplitzcri}
         Let $a\in \bddf$. Then $T_a\in \bddH$ if and only if $a\in \bddfH$.
     \end{proposition}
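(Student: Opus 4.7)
The plan is to reduce both directions to the single intertwining identity $\actopL{x}{T_a}=T_{\actfL{x}{a}}$ for all $x\in G$ and $a\in\bddf$, and then invoke the injectivity of the Toeplitz map $a\mapsto T_a$ (noted in Subsection \ref{subsec:Berg} as a consequence of the injectivity of the Berezin transform and $B(T_a)=B(a)$).

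To establish the intertwining identity, I would first write the Toeplitz operator as a quantum harmonic analysis convolution using Lemma \ref{lem:Toeplitz}, namely $T_a=a\ast (c_\nu\Phi)$. Then I would apply Lemma \ref{lem:translation}(2), which says that the translation $L^\nu_x$ of a function-operator convolution $a\ast S$ corresponds to the left translation of the function symbol, to obtain
\[
\actopL{x}{T_a}=\actopL{x}{a\ast c_\nu\Phi}=\actfL{x}{a}\ast c_\nu\Phi=T_{\actfL{x}{a}}.
\]
This single identity is the main ingredient; the verification of Lemma \ref{lem:translation} already handled the only nontrivial change of variable, so no further work is needed here.

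For the $(\Leftarrow)$ direction, if $a\in\bddfH$, then $\actfL{h}{a}=a$ for every $h\in H$, and the intertwining identity gives $\actopL{h}{T_a}=T_{\actfL{h}{a}}=T_a$, so $T_a\in\bddH$. For the $(\Rightarrow)$ direction, if $T_a\in\bddH$, then for every $h\in H$ we have $T_a=\actopL{h}{T_a}=T_{\actfL{h}{a}}$, and the injectivity of the map $a\mapsto T_a$ forces $\actfL{h}{a}=a$, so $a\in\bddfH$.

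I do not anticipate a substantive obstacle: the only technical point is that the paper formulates $H$-invariance of a function on $\Omega$ in terms of $\actfL{h}$, and this must be matched with the $G$-action on functions on $G$ via the identification $a(x)=a(x\cdot 0)$. Since this identification is already used consistently throughout the paper and since $\actfL{h}$ corresponds to left translation on $G$ under this identification, the two notions of $H$-invariance coincide and the argument goes through without further comment.
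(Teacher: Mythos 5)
Your proposal is correct and follows essentially the same route as the paper: both directions reduce to the identity $\actopL{h}{T_a}=T_{\actfL{h}{a}}$, obtained by writing $T_a$ as the convolution $a\ast(c_\nu\Phi)$ via Lemma \ref{lem:Toeplitz} and applying Lemma \ref{lem:translation}, with the converse then following from the injectivity of the Toeplitz map noted in Subsection \ref{subsec:Berg}. No gaps.
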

     \begin{proof}
         We have that $T_a=a\ast \Phi\in \bddH$ if $a\in \bddfH$ by Lemma \ref{lem:translation}. 
         To prove the converse, assume $T_a\in \bddH$. Then by Lemma \ref{lem:translation},
         $$\actfL{h}{a}\ast \Phi=\actopL{h}{a\ast \Phi}=a\ast \Phi,\quad h\in H.$$
         Then since the map $a\mapsto T_a$ is injective (see subsection \ref{subsec:Berg}), we get that
         $$\actfL{h}{a}=a,\quad h\in H.$$
         Therefore, $a\in \bddfH$.
     \end{proof}

     Let $\toepalgH$ denote the subalgebra of $\toepalg$ generated by the set of all Toeplitz operators $T_a$, with $a\in \bddfH$. Also, define the set $\toep^H$ by
     $$\toep^H:=\{T_a\in \bdd\mid a\in \bddfH\}.$$
     The following is an $H$-invariant analog of Lemma \ref{lem:convol_traceclass}.
     
     \begin{lemma}\label{lem:Hconvol_trace}
         We have the following:
         \begin{align*}
         \bddfG^H\ast \traceop\subset \overline{\toep^H}.
     \end{align*}
     \end{lemma}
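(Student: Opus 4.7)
The plan is to mirror the proof of Lemma~\ref{lem:convol_traceclass} and simply observe that the $H$-invariance of the convolver $\psi$ passes to the convolution $\psi\ast a$. Concretely, fix $\psi\in\bddfG^H$ and $S\in\traceop$; I want to show $\psi\ast S\in\overline{\toep^H}$.

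First, I would invoke the density result of Berger--Coburn (already recorded just before Lemma~\ref{lem:convol_traceclass}): given $\epsilon>0$, there exists $a\in\Lone$ with $\|T_a-S\|_\tr<c_\nu\epsilon/(\|\psi\|_\infty+1)$. Applying the QHA Young inequality (2) with $\psi\in\bddfG$ and the trace-class operator $T_a-S$ yields
\[
\|\psi\ast T_a-\psi\ast S\|=\|\psi\ast(T_a-S)\|\le \tfrac{1}{c_\nu}\|\psi\|_\infty\|T_a-S\|_\tr<\epsilon.
\]
Next, as in Lemma~\ref{lem:convol_traceclass}, I would use Lemma~\ref{lem:Toeplitz} and associativity of the QHA convolution to rewrite
\[
\psi\ast T_a=\psi\ast(a\ast c_\nu\Phi)=(\psi\ast a)\ast c_\nu\Phi=T_{\psi\ast a},
\]
where $\psi\ast a\in\bddf$ by the function-side Young inequality.

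The only new ingredient is to verify that $\psi\ast a$ actually lies in $\bddfH$, not merely in $\bddf$. Here I would apply Lemma~\ref{lem:translation}(1): for every $h\in H$,
\[
\actfL{h}{\psi\ast a}=\actfL{h}{\psi}\ast a=\psi\ast a,
\]
because $\psi$ is $H$-invariant. Thus $\psi\ast a\in\bddfH$ and by Proposition~\ref{prop:Toeplitzcri}, $T_{\psi\ast a}\in\toep^H$. Since $\epsilon>0$ was arbitrary, $\psi\ast S\in\overline{\toep^H}$, which is the desired inclusion.

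There is no serious obstacle: the argument is the $H$-equivariant upgrade of Lemma~\ref{lem:convol_traceclass}, and the only point requiring a check is the interaction of $H$-invariance with the mixed function--operator convolution, which has been prepared in Lemma~\ref{lem:translation}. The approximating symbols $a\in\Lone$ need not be $H$-invariant themselves; averaging by the (possibly noncompact) group $H$ is unnecessary because $H$-invariance of $\psi$ alone forces $\psi\ast a$ to be $H$-invariant, and this is precisely what allows the method to work without any compactness hypothesis on $H$.
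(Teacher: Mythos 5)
Your proof is correct and follows essentially the same route as the paper: the paper's own proof likewise runs the approximation argument of Lemma~\ref{lem:convol_traceclass} and then observes, via the translation identity of Lemma~\ref{lem:translation}(1), that $H$-invariance of the $L^\infty$ convolver forces $H$-invariance of the resulting symbol. You merely spell out the steps that the paper compresses into ``by an argument similar to the proof of Lemma~\ref{lem:convol_traceclass}.''
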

     \begin{proof}
         Let $S\in \bddfG^H\ast \traceop$. Then $S=a\ast T$, for some $a\in \bddfG^H$ and $T\in \traceop$.  Now by an argument similar to the proof of Lemma \ref{lem:convol_traceclass}, $S$ can be approximated by Toeplitz operators of the form
         $$(a\ast \psi)\ast \Phi, \quad \psi\in \Lone.$$
         We note that $a\ast \psi$ is $H$-invariant as, for $h\in H$,
         $$\actfL{h}{a\ast \psi}=\actfL{h}{a}\ast \psi=a\ast \psi,$$
         concluding the proof.
     \end{proof}
     \subsection{Weakly localized operators and the $H$-invariant Toeplitz algebra}
     Let $\nu>-1+(r-1)a/2$.
     For $\alpha\in (\frac{\gen-\nu-2}{2},\frac{\nu+b+2}{2})$, let
     $$\wloc^H:=\wloc\cap \bddH.$$
     Then $\wloc^H$ is a selfaljoint algebra whose norm closure is a $C^*$-subalgebra of $\bddH$. Furthermore, by propositions \ref{prop:Toeplitz_wloc} and \ref{prop:Toeplitzcri}, we have the inclusion
        \begin{align}\label{eq:HToeplitz_in}
            \{T_a\mid a\in \bddfH\}\subset \wloc^H.
        \end{align}
    The following lemma is the last ingredient we need.
    
    \begin{lemma}\label{lem:Hwloc_in}
         Let $\alpha\in (\frac{\gen-\nu-2}{2},\frac{\nu+b+2}{2})$. Then
         $$\wloc^H\subset \overline{\toep^H}.$$
    \end{lemma}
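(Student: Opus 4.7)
The plan is to mirror the proof of Theorem \ref{theo:wloc_Toeplitz}, carrying $H$-invariance through each step. Given $S \in \wloc^H$, the goal is to show $S \in \overline{\toep^H}$. Since $\wloc^H \subset \wloc$ and $S$ is in particular weakly localized, Proposition \ref{prop:S_G} already gives $\lim_{\rho \to \infty} S_{\Br} = S$ in operator norm, so it is enough to prove that each $S_{\Br}$ lies in $\overline{\toep^H}$.

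The main technical step---and the one I expect to be the main obstacle---is to show that when $S \in \bddH$, the bounded function $a_{S,z}(y) = \ipnu{S\pinu(y)1}{\pinu(y)k_z^\nu}$ from Lemma \ref{lem:S_Br} is left $H$-invariant in $y$ for every fixed $z \in \Omega$. The delicate point is handling the cocycle $m_\nu$ arising from the projective representation. I would compute $a_{S,z}(hy)$ by factoring $\pinu(hy) = \overline{m_\nu(h,y)}\pinu(h)\pinu(y)$; because $|m_\nu(h,y)| = 1$, the scalar factors cancel on both sides of the inner product, leaving $a_{S,z}(hy) = \ipnu{\pinu(h)^*S\pinu(h)\pinu(y)1}{\pinu(y)k_z^\nu}$. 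Unitarity of $\pinu(h)$ together with $\actopL{h}{S} = S$ then forces $\pinu(h)^*S\pinu(h) = S$, so $a_{S,z}(hy) = a_{S,z}(y)$, i.e.\ $a_{S,z} \in \bddfG^H$.

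With $H$-invariance of $a_{S,z}$ in hand, Lemma \ref{lem:Hconvol_trace} yields $a_{S,z} \ast \Phi_z \in \overline{\toep^H}$ for every $z \in \Omega$. The Riemann-sum argument from Proposition \ref{prop:in_Toepalg}, together with the norm-continuity of $z \mapsto \tfrac{1}{h(z,z)^\gen}(a_{S,z} \ast \Phi_z)$ provided by Lemma \ref{lem:continuity_D} and the precompactness of $\Dr(0)$, exhibits $S_{\Br}$ as a norm limit of finite linear combinations of operators of the form $a_{S,z} \ast \Phi_z$. Since $\overline{\toep^H}$ is a closed linear subspace of $\bdd$, this gives $S_{\Br} \in \overline{\toep^H}$ for every $\rho \geq 0$. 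Letting $\rho \to \infty$ and invoking Proposition \ref{prop:S_G} finishes the argument: $S = \lim_{\rho \to \infty} S_{\Br} \in \overline{\toep^H}$.

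The remainder of the work is bookkeeping. The only genuinely new input beyond Theorem \ref{theo:wloc_Toeplitz} is the $H$-invariance of $a_{S,z}$; every other ingredient (the integral representation $S_{\Br}$, its norm approximation by Toeplitz operators via Riemann sums, and the convergence $S_{\Br} \to S$) carries over verbatim once one has the $H$-equivariant version of Lemma \ref{lem:convol_traceclass}, which is precisely Lemma \ref{lem:Hconvol_trace}.
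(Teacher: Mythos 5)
Your proposal is correct and follows essentially the same route as the paper: reduce to showing $S_{\Br}\in\overline{\toep^H}$ via Proposition \ref{prop:S_G}, verify the left $H$-invariance of $a_{S,z}$ from the $H$-invariance of $S$ (the cocycle cancellation you spell out is exactly the mechanism implicit in the paper's computation of $\actfL{h}{a_{S,z}}$), and conclude by the Riemann-sum argument of Proposition \ref{prop:in_Toepalg} together with Lemma \ref{lem:Hconvol_trace}. The only cosmetic difference is the order in which Lemma \ref{lem:Hconvol_trace} and the closure of the linear span are invoked.
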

    \begin{proof}
        Let $S\in \wloc^H$. Then by Proposition \ref{prop:S_G}, $\lim_{\rho\to \infty} S_{\Br}=S$ in operator norm. Hence, by Lemma \ref{lem:Hconvol_trace}, it is enough to prove that
        $$ S_{\Br}\in \overline{\bddfG^H\ast \traceop}.$$
        Note that by Lemma \ref{lem:S_Br},
        $$S_{\Br}=c_\nu^2 \int_{\Dr(0)} a_{S,z} \ast \Phi_z \inv{z}.$$
    Note that for $h\in H$ and $x\in G$,
    \begin{align*}
        \actfL{h}{a_{S,z}}(x)&= a_{S,z}(h^{-1}x)\\
        &=\ipnu{S\pinu(h^{-1}x)1}{\pinu(h^{-1}x)k_{z}^\nu}\\
        &=\ipnu{\actopL{h}{S}\pinu(x)1}{\pinu(x)k_z^\nu}\\
        &=\ipnu{S\pinu(x)1}{\pinu(x)k_z^\nu}\quad \text{(as $S$ is $H$-invariant)}\\
        &=a_{S,z}(x).
    \end{align*}
    Therefore $a_{S,z}\ast \Phi_z\in \bddfG^H\ast \traceop.$ Then we have
    $$S_{\Br}\in \overline{\bddfG^H\ast \traceop}$$
    by a similar argument as in the proof of Proposition \ref{prop:in_Toepalg}.
        
    \end{proof}
     \begin{theorem}\label{theo:wloc_ToeplitzH}
        Let $\nu>-1+(r-1)a/2$ and let $\alpha\in (\frac{\gen-\nu-2}{2},\frac{\nu+b+2}{2})$. We have the following:
        $$\toepalgH=\overline{\wloc^H}=\overline{\toep^H}=\overline{\bddfG^H\ast \traceop}.$$
    \end{theorem}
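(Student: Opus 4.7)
The plan is to mirror the chain-of-inclusions strategy used for Theorem \ref{theo:wloc_Toeplitz}, now using the $H$-invariant tools already assembled: inclusion \eqref{eq:HToeplitz_in}, Lemma \ref{lem:Hconvol_trace}, and Lemma \ref{lem:Hwloc_in}.

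First I would stack the two-sided inclusion
$$\toep^H \subset \wloc^H \subset \overline{\toep^H},$$
where the left containment is \eqref{eq:HToeplitz_in} and the right one is Lemma \ref{lem:Hwloc_in}; taking norm closures immediately yields $\overline{\wloc^H} = \overline{\toep^H}$. Since $\wloc^H$ is a self-adjoint subalgebra of $\bddH$, its norm closure is automatically a $C^*$-algebra (multiplication is jointly continuous on norm-bounded sets and involution is an isometry). Hence $\overline{\toep^H}$ is a $C^*$-subalgebra of $\toepalg$ containing $\toep^H$, and by the minimality of $\toepalgH$ as the $C^*$-subalgebra of $\toepalg$ generated by $\toep^H$, I obtain $\toepalgH \subset \overline{\toep^H}$; the reverse inclusion is immediate since $\toepalgH$ is norm closed and contains $\toep^H$. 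This gives $\toepalgH = \overline{\toep^H}$.

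To slot in the convolution description I would invoke Lemma \ref{lem:Toeplitz}, which rewrites $T_a = a \ast (c_\nu\Phi)$ for any $a \in \bddfH$, so that $\toep^H \subset \bddfG^H \ast \traceop$. Combined with Lemma \ref{lem:Hconvol_trace} this produces
$$\toep^H \subset \bddfG^H \ast \traceop \subset \overline{\toep^H},$$
and closing up yields $\overline{\bddfG^H \ast \traceop} = \overline{\toep^H}$, closing the full chain $\toepalgH = \overline{\wloc^H} = \overline{\toep^H} = \overline{\bddfG^H \ast \traceop}$.

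I do not anticipate a genuine obstacle at this stage: the substantive work has already been absorbed into Lemma \ref{lem:Hwloc_in}, where one had to verify that when a weakly localized $S$ is itself $H$-invariant, the symbols $a_{S,z}$ appearing in the integral representation $S = \lim_{\rho\to\infty} S_{\Br}$ inherit $H$-invariance as elements of $\bddfG$. With that lemma in hand, Theorem \ref{theo:wloc_ToeplitzH} is just a formal reassembly of inclusions and closures on top of the machinery of Section \ref{sec:Toeplitz}.
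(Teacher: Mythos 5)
Your proposal is correct and follows essentially the same route as the paper: the paper's own proof is a one-line reference to the argument of Theorem \ref{theo:wloc_Toeplitz}, assembled from exactly the ingredients you cite, namely the inclusion \eqref{eq:HToeplitz_in}, Lemma \ref{lem:Hconvol_trace}, Lemma \ref{lem:Hwloc_in}, and the minimality of $\toepalgH$ as the $C^*$-algebra generated by $\toep^H$. Your write-up just makes the chain of inclusions and the closure-taking explicit.
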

    \begin{proof}
        The proof of is similar to the proof of Theorem \ref{theo:wloc_Toeplitz} and uses \ref{eq:HToeplitz_in} and Lemmas \ref{lem:Hconvol_trace} and \ref{lem:Hwloc_in}.
    \end{proof}

    \noindent \textbf{Acknowledgements:} The author acknowledges Gestur \'Olafsson, Mishko Mitkovski and Matthew Dawson for many useful discussions. The author is also thankful to the anonymous reviewer for suggestions that helped her improve the article.\\

    \noindent \textbf{Declaration of generative AI and AI-assisted technologies in the writing process:}
    During the preparation of this work, the author did not use any generative AI and AI-assisted technologies.

\bibliographystyle{plain} 
\bibliography{main}

\end{document}